\DeclarePairedDelimiter\floor{\lfloor}{\rfloor}
\title[Dorronsoro's theorem]{Dorronsoro's theorem in Heisenberg groups}
\author{Katrin F\"assler and Tuomas Orponen}
\keywords{Heisenberg group, Sobolev space, coarse differentiation}
\address{Department of Mathematics\\ University of Fribourg \\ Chemin du Mus\'{e}e 23,
CH-1700 Fribourg, Switzerland}
\address{University of Helsinki, Department of Mathematics and Statistics}
\subjclass[2010]{26B05 (Primary) 26A33, 42B35 (Secondary)}
\thanks{K.F.\ is supported by the Swiss National Science Foundation via the project \emph{Intrinsic rectifiability and mapping theory on the Heisenberg group}, grant No.  161299. T.O. is supported by the Academy of Finland via the project \emph{Quantitative rectifiability in Euclidean and non-Euclidean spaces}, grant No. 309365.}
\email{tuomas.orponen@helsinki.fi}
\email{katrin.faessler@unifr.ch}
\newcommand{\R}{\mathbb{R}}
\newcommand{\He}{\mathbb{H}}
\newcommand{\N}{\mathbb{N}}
\newcommand{\C}{\mathbb{C}}
\newcommand{\Z}{\mathbb{Z}}
\newcommand{\calJ}{\mathcal{J}}
\newcommand{\calD}{\mathcal{D}}
\newcommand{\calB}{\mathcal{B}}
\newcommand{\calS}{\mathcal{S}}
\newcommand{\calA}{\mathcal{A}}
\newcommand{\Rea}{\operatorname{Re}}
\newcommand{\sub}{\bigtriangleup_{\He}}
\numberwithin{equation}{section}
\theoremstyle{plain}
\newtheorem{thm}[equation]{Theorem}
\newtheorem{lemma}[equation]{Lemma}
\newtheorem{cor}[equation]{Corollary}
\newtheorem{proposition}[equation]{Proposition}
\theoremstyle{definition}
\newtheorem{definition}[equation]{Definition}
\theoremstyle{remark}
\newtheorem{remark}[equation]{Remark}
\newcommand{\nref}[1]{(\hyperref[#1]{#1})}
\begin{document}

\begin{abstract} A theorem of Dorronsoro from the 1980s quantifies the fact that real-valued Sobolev functions on Euclidean spaces can be approximated by affine functions almost everywhere, and at all sufficiently small scales. We prove a variant of Dorronsoro's theorem in Heisenberg groups: functions in horizontal Sobolev spaces can be approximated by affine functions which are independent of the last variable.

As an application, we deduce new proofs for certain vertical vs. horizontal Poincar\'e inequalities for real-valued functions on the Heisenberg group, originally due to Austin-Naor-Tessera and Lafforgue-Naor.
\end{abstract}

\maketitle

\section{Introduction}

We start with a word on general notation. For $n\in \mathbb{N}$, we consider the \emph{$n$-th Heisenberg group} $\mathbb{H}^n = (\mathbb{R}^{2n+1},\cdot)$. Points in $\mathbb{H}^n$ will typically be denoted by $x=(z,t)\in \mathbb{R}^{2n}\times \mathbb{R}$, and we
write $X_1,\ldots,X_{2n}$ for the left-invariant vector fields with the property that $X_j(0)$ is the standard $j$-th basis vector, $j\in \{1,\ldots,2n\}$. We will make no notational distinction between vector fields and the associated differential operators. If the derivatives $X_j f$, $j\in \{1,\ldots,2n\}$, of a function $f:\He^n \to \R$ exist in the distributional sense, we denote by $\nabla_{\He}f= (X_1f,\ldots, X_{2n}f)$ the \emph{horizontal gradient} of $f$. 
The symbol $B(x,r)$ stands for an open ball with center $x$ and radius $r$ with respect to the Kor\'{a}nyi metric on $\He^n$.

Let $f \colon \He^{n} \to \R$ be a locally integrable function, let $d \in \{0,1\}$, and let $\calA_{d}$ be the family of (real) polynomials $\R^{2n} \to \R$ of degree at most $d$. Some of the definitions below would, formally, make sense for all $d \in \N$, but only the cases $d \in \{0,1\}$ will be considered in the paper. Slightly abusing notation, we often view the elements of $\calA_{d}$ as functions on $\He^{n} = \R^{2n} \times \R \to \R$ depending only on the first $2n$ variables. For $x \in \He^{n}$, $r > 0$, and $d \in \{0,1\}$, we define the following quantity:
\begin{equation}\label{betaDefinition} \beta_{f,d}(B(x,r)) := \fint_{B(x,r)} |f(y) - A^{d}_{x,r}(y)| \, dy. \end{equation}
Here "$dy$" refers to integration with respect to Lebesgue measure on $\R^{2n + 1}$, and $A^{d}_{x,r}$ is the unique map in $\calA_{d}$ with the property that
\begin{equation}\label{affApprox} \int_{B(x,r)} (f(y) - A^{d}_{x,r}(y))A(y) \, dy = 0, \qquad A \in \calA_{d}. \end{equation}
If $f \in L^{2}(B(x,r))$, then $A^{d}_{x,r}$ is simply the orthogonal projection in $L^{2}(B(x,r))$ to the subspace $\calA_{d}$, and for instance
\begin{displaymath} A_{x,r}^{0} \equiv \langle f \rangle_{B(x,r)}, \end{displaymath}
the $L^{1}$-average of $f$ over $B(x,r)$. It is not hard to compute $A_{x,r}^{1}$ explicitly either, see \eqref{A1}, and this is one way to establish the existence of $A^{1}_{x,r}$ in the generality of $f \in L^{1}_{loc}(\He^{n})$. The uniqueness of $A_{x,r}^{d}$ is straightforward: if $A_{1},A_{2} \in \calA_{d}$ are two candidates satisfying \eqref{affApprox}, then $A_{1} - A_{2}$ is $L^{2}(B(x,r))$-orthogonal to $\calA_{1} \cap L^{2}(B(x,r))$, and hence the projection of $A_{1} - A_{2}$ to $\calA_{1} \cap L^{2}(B(x,r))$ is zero. On the other hand, since $A_{1} - A_{2} \in \calA_{1}$, this projection equals $A_{1} - A_{2}$, and so $A_{1} = A_{2}$.

Here is the main result:
\begin{thm}\label{main} Let $1 < p < \infty$, and let $f \in L^{p}(\He^{n})$ be a function with $\nabla_{\He}f \in L^{p}(\He^{n})$. Define the following square function:
\begin{displaymath} Gf(x) := \left(\int_{0}^{\infty} \left[ \tfrac{1}{r}\beta_{f,1}(B(x,r)) \right]^{2} \, \frac{dr}{r} \right)^{1/2}. \end{displaymath}
Then,
\begin{displaymath} \|Gf\|_{L^{p}(\He^{ n})} \lesssim \|\nabla_{\He} f\|_{L^{p}(\He^{n})}. \end{displaymath}
\end{thm}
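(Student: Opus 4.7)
The plan is to mimic the classical proof of Dorronsoro's theorem: combine a pointwise bound on $r^{-1}\beta_{f,1}(B(x,r))$ with an $L^{p}$-boundedness estimate for a continuous Littlewood--Paley square function on $\He^{n}$. Let $P_{s}:=e^{s\Delta_{\He}}$ denote the heat semigroup associated to the sub-Laplacian $\Delta_{\He} = \sum_{j=1}^{2n} X_{j}^{2}$, and split $f = P_{r^{2}}f + (f-P_{r^{2}}f)$.

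The pointwise bound is the Heisenberg-specific ingredient. Using the explicit formula for $A^{1}_{x,r}$ (recorded in \eqref{A1}), one sees that $A^{1}_{x,r}$ plays the role of a ``best horizontal affine Taylor polynomial'' of $f$ on $B(x,r)$, so that subtracting it cancels both the mean of $f$ and a horizontal linear piece. For the smoothed summand, a Poincar\'{e}-type remainder along left-invariant horizontal curves inside $B(x,r)$ should yield
\begin{displaymath} r^{-1} \fint_{B(x,r)} |P_{r^{2}}f(y)-A^{1}_{x,r}(y)|\,dy \lesssim r\cdot M\bigl(|\nabla_{\He}^{2}P_{r^{2}}f|\bigr)(x), \end{displaymath}
with $M$ the Hardy--Littlewood maximal operator on $\He^{n}$. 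For the rough summand, the horizontal Poincar\'{e} inequality gives $r^{-1}\fint_{B(x,r)}|f-P_{r^{2}}f|\lesssim M(|\nabla_{\He}(f-P_{r^{2}}f)|)(x)$. The subtlety here is that any Taylor expansion on $B(x,r)$ a priori produces terms involving the vertical derivative $Tf$; in $\He^{n}$ these must be reabsorbed into horizontal derivatives via the commutator identity $T = [X_{j},X_{n+j}]$ together with the vertical smoothing afforded by $P_{r^{2}}$.

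Having reduced to the pointwise bound, the $L^{p}$ step follows from the standard heat-semigroup Littlewood--Paley inequality
\begin{displaymath} \left\|\left(\int_{0}^{\infty}|r\nabla_{\He}P_{r^{2}}g|^{2}\,\tfrac{dr}{r}\right)^{1/2}\right\|_{L^{p}(\He^{n})} \lesssim \|g\|_{L^{p}(\He^{n})}, \qquad 1 < p < \infty, \end{displaymath}
applied with $g=\nabla_{\He}f$, combined with the Fefferman--Stein vector-valued maximal inequality on the doubling space $\He^{n}$. The main obstacle will be the pointwise step: producing a clean sub-elliptic Taylor expansion whose remainder controls the $t$-oscillations of $f$ by horizontal quantities at scale $r$, with errors that sum nicely in $dr/r$. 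This is where the geometric content of the theorem concentrates, and explains why it is precisely horizontal (rather than fully two-variable) affines that suffice for the square-function bound.
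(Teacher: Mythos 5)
Your proposal takes a genuinely different route from the paper, which does \emph{not} prove Theorem \ref{main} directly via a heat-semigroup split. Instead the paper derives Theorem \ref{main} as the case $\alpha=1$ of Theorem \ref{mainGeneral}, proving the latter for $0<\alpha<1$ by quoting a Strichartz-type square function bound, for $1<\alpha<2$ by reducing to the previous range via a Poincar\'e argument (Proposition \ref{dorrProp}), and then obtaining $\alpha=1$ by complex interpolation (Lemma \ref{interpolationLemma}). That the paper interpolates rather than argues directly is not an accident, and this is precisely where your plan has a genuine gap.

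The smooth-part estimate in your plan is fine in spirit: after using Lemma \ref{lemma1} to replace $A^{1}_{x,r}$ by the horizontal degree-$1$ Taylor polynomial of $P_{r^{2}}f$ at $x$, the error on $B(x,r)$ is $\lesssim r^{2}$ times second-order horizontal (and, via $T=[X_j,X_{n+j}]$, vertical) derivatives, and the corresponding square function
\begin{displaymath}
\left(\int_{0}^{\infty}\bigl|\,r\,\nabla_{\He}^{2}P_{r^{2}}f\,\bigr|^{2}\,\frac{dr}{r}\right)^{1/2}
\end{displaymath}
is of Littlewood--Paley type and bounded on $L^{p}$. The problem is the rough part. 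Your bound
\begin{displaymath}
r^{-1}\fint_{B(x,r)}\bigl|(f-P_{r^{2}}f)-\langle f-P_{r^{2}}f\rangle_{B(x,r)}\bigr|\,dy\;\lesssim\;M\bigl(|\nabla_{\He}(f-P_{r^{2}}f)|\bigr)(x)
\end{displaymath}
is correct (Poincar\'e), but after removing the maximal function by Fefferman--Stein, you are left with
\begin{displaymath}
\left\|\left(\int_{0}^{\infty}\bigl|(I-P_{r^{2}})\,\nabla_{\He}f\bigr|^{2}\,\frac{dr}{r}\right)^{1/2}\right\|_{L^{p}(\He^{n})},
\end{displaymath}
and this square function is not finite: the operator family $(I-P_{r^{2}})$ does not vanish as $r\to\infty$, so the $r$-integral diverges logarithmically. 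A Fourier-side computation in the Euclidean model makes this explicit: the multiplier of $(I-P_{r^{2}})$ is $1-e^{-r^{2}|\xi|^{2}}$, and $\int_{0}^{\infty}(1-e^{-r^{2}|\xi|^{2}})^{2}\,\tfrac{dr}{r}=\tfrac12\int_{0}^{\infty}(1-e^{-s})^{2}\tfrac{ds}{s}=+\infty$. In other words, your proposed pointwise bound is too lossy at large $r$; it does not reproduce the decay that $r^{-1}\beta_{f,1}(B(x,r))$ actually has there. By contrast, for $\alpha\ne1$ the analogous heat split does close: at $\alpha\in(0,1)$ the prefactor $r^{-\alpha}$ makes $\int_{0}^{\infty}r^{-2\alpha}(1-e^{-r^{2}|\xi|^{2}})^{2}\tfrac{dr}{r}$ converge, and at $\alpha\in(1,2)$ the extra factor $r^{1-\alpha}$ does the same job. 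This is exactly why Dorronsoro, and this paper, treat $0<\alpha<1$ and $1<\alpha<2$ directly and then recover $\alpha=1$ by complex interpolation between $S^{p}_{\alpha_{0}}$ and $S^{p}_{\alpha_{1}}$ with $\alpha_{0}<1<\alpha_{1}$.

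A secondary, more technical issue is the commutation you implicitly use, $\nabla_{\He}P_{r^{2}}f=P_{r^{2}}\nabla_{\He}f$: for left-invariant $X_{j}$ and the group convolution $P_{r^{2}}f=f\ast h_{r^{2}}$ one has $X_{j}(f\ast h_{r^{2}})=f\ast X_{j}h_{r^{2}}$, not $(X_{j}f)\ast h_{r^{2}}$, so this step needs justification (or should be replaced by working directly with $\nabla_{\He}h_{r^{2}}$). But this is repairable; the rough-part divergence is the substantive obstacle. To salvage a direct argument at $\alpha=1$ you would need a pointwise bound on $r^{-1}\beta_{f-P_{r^{2}}f,1}(B(x,r))$ that captures genuine decay at large $r$ (beyond Poincar\'e), or you should switch to the paper's strategy: prove the estimate for $\alpha\ne1$ by the method you propose, and then invoke complex interpolation as in Section \ref{interpolation}.
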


The theorem applies in particular to compactly supported Lipschitz functions on $\He^n$. It is an $\He^{n}$ variant of a theorem of Dorronsoro \cite[Theorem 2]{MR796440} from the 80's, and we will simply implement his proof strategy in $\He^{n}$. As in \cite{MR796440}, we will derive Theorem \ref{main} from a more general statement, which concerns affine approximation of functions in Sobolev spaces of fractional order, as defined by Folland \cite{MR0494315}. The definition is based on the fractional (sub-)Laplace operator
\begin{displaymath} f \mapsto (-{\sub}_{,p})^{\alpha}f, \qquad \alpha\in \mathbb{C}, \: f \in \mathrm{Dom}((-{\sub}_{,p})^{\alpha})\subset L^p(\mathbb{H}^n), \end{displaymath}
see \cite[p. 181,186]{MR0494315} (where the same object is denoted by $\calJ^{\alpha}$). For the definition of Sobolev spaces below, we would only need to consider $\alpha\geq 0$, but complex values of $\alpha$ will make a brief appearance in the proof of Lemma \ref{interpolationLemma}.


\begin{definition}\label{d:Sobolev}
Let $1 < p < \infty$ and $\alpha \geq 0$. The (inhomogeneous) \emph{Sobolev space} $S^{p}_{\alpha}$ is $\mathrm{Dom}((-{\sub}_{,p})^{\alpha/2})$ equipped with the norm $$\|f\|_{p,\alpha}:=\|f\|_{L^{p}(\He^{n})} + \|(-{\sub}_{,p})^{\alpha/2}(f)\|_{L^{p}(\He^{n})}.$$
\end{definition}


In the sequel, we omit the subscript ``$p$'' in the notation for the operator $(-{\sub}_{,p})^{\alpha/2}$ whenever the meaning is clear from the context (in particular, if $(-{\sub}_{,p})^{\alpha/2}$ acts on a function $f \in S^{p}_{\alpha}$).

\begin{remark} The properties of the spaces $(S^p_{\alpha},\|\cdot\|_{p,\alpha})$ are discussed in detail in \cite[Section 4]{MR0494315} and, in a more general setting, in \cite[Section 4.4.1]{MR3469687}. In particular, $(S^p_{\alpha},\|\cdot\|_{p,\alpha})$ is a Banach space for $1 < p < \infty$ and $\alpha \geq 0$.  \end{remark}

Here is the generalised version of Theorem \ref{main}:


\begin{thm}\label{mainGeneral} Let $1 < p < \infty$ and $0 < \alpha < 2$. For $f\in S^p_{\alpha}$, define the following square function:
\begin{displaymath} G_{\alpha}f(x) := \left( \int_{0}^{\infty}  \left[ \tfrac{1}{r^{\alpha}} \beta_{f,\floor{\alpha}}(B(x,r)) \right]^{2} \, \frac{dr}{r} \right)^{1/2}, \qquad x \in \He^{n}. \end{displaymath}
Here $\floor{\alpha}$ stands for the integer part of $\alpha$. Then, $G_{\alpha}f \in L^{p}(\He^{n})$ with
\begin{equation}\label{eq:general_est} \|G_{\alpha}f\|_{L^{p}(\He^{n})} \lesssim \|(-\sub)^{\alpha/2}(f)\|_{L^{p}(\He^{n})}.  \end{equation}
\end{thm}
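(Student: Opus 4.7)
The plan is to follow Dorronsoro's original strategy, adapted to the sub-Riemannian setting of $\He^n$. Setting $g := (-\sub)^{\alpha/2} f$ and $f = I_\alpha g$, where $I_\alpha := (-\sub)^{-\alpha/2}$ is the Folland Riesz potential, the desired estimate \eqref{eq:general_est} becomes the $L^p \to L^p(\He^n; L^2(\R_+, dr/r))$ boundedness of the vector-valued operator
\begin{displaymath}
T_\alpha g(x) \;:=\; \left\{ r \mapsto r^{-\alpha}\, \beta_{I_\alpha g,\floor{\alpha}}(B(x,r))\right\}.
\end{displaymath}
I would therefore aim to reduce matters to verifying that $T_\alpha$ is a vector-valued operator of Calder\'on--Zygmund type, or is amenable to complex interpolation in the spirit of Stein.

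I would first treat $p = 2$ via a continuous Littlewood--Paley decomposition built from the heat semigroup $e^{s\sub}$: a Calder\'on-type reproducing formula $g = c\int_0^\infty \psi(-s^2\sub)^2 g\, ds/s$ with $\psi$ a suitable bump. The key ingredient is a bilinear almost-orthogonality estimate showing that
\begin{displaymath}
r^{-\alpha}\fint_{B(x,r)} \bigl| I_\alpha \psi(-s^2\sub) h(y) - A(y)\bigr|\,dy \;\lesssim\; \min\{s/r,\, r/s\}^{\epsilon}\, \|h\|_{L^2}
\end{displaymath}
for some $\epsilon > 0$, where $A \in \calA_{\floor{\alpha}}$ is the best approximating Euclidean polynomial. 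For $r \ll s$, smoothness of $I_\alpha \psi(-s^2\sub) h$ combined with the subtraction of $A$ yields decay of order $(r/s)^{\epsilon}$; for $r \gg s$, the vanishing moments of $\psi(-s^2\sub)h$ together with the off-diagonal decay of the Folland kernel of $I_\alpha$ produce decay of order $(s/r)^{\epsilon}$. Schur's test plus the Plancherel identity for the reproducing formula then deliver the $L^2$ bound.

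For $p \neq 2$, I would either check the standard size and smoothness estimates for the $L^2(dr/r)$-valued kernel of $T_\alpha$ (exploiting the Folland kernel's smoothness off-diagonal and the stability of the projection $f \mapsto A^{\floor{\alpha}}_{x,r}$) and invoke vector-valued Calder\'on--Zygmund theory on the space of homogeneous type $\He^n$, or --- more in line with the hint from Lemma \ref{interpolationLemma} and its use of ``complex $\alpha$'' --- embed $T_\alpha$ into an analytic family $\{T_z\}_{0 < \Re z < 2}$, establish bounds on the two boundary lines of the strip, and appeal to Stein's complex interpolation theorem. The most delicate point, however, is the $L^2$ almost-orthogonality above: in the Euclidean setting the polynomials in $\beta_{f,d}$ dovetail naturally with Taylor expansions of the Riesz kernel, whereas $\calA_{\floor{\alpha}}$ is built from Euclidean monomials in the horizontal variables only, while intrinsic smoothness on $\He^n$ is governed by $\sub$ and the non-Euclidean dilations. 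Reconciling these two viewpoints --- especially when $\floor{\alpha} = 1$, so that a nontrivial affine part must be controlled using only horizontal differentiability --- is the principal obstacle I expect to face.
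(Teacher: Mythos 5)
Your proposal is a genuinely different route from the paper's: the paper does not use Littlewood--Paley decompositions, almost-orthogonality, Schur's test, or vector-valued Calder\'on--Zygmund theory. Instead it proceeds by three essentially disjoint arguments. For $0<\alpha<1$ it quotes a ready-made square-function estimate from \cite{MR1828225} involving first differences $\fint_{B(r)}|f(x\cdot y)-f(x)|\,dy$, and shows pointwise that $G_\alpha f\lesssim\calS_\alpha f$. For $1<\alpha<2$ it proves the $\He^n$-analogue of Dorronsoro's reduction $G_{\alpha+1}f\lesssim\sum_j G_\alpha(X_j f)$ via the weak $1$-Poincar\'e inequality, and combines this with the mapping property $\|(-\sub)^{\alpha/2}X_j f\|_p\lesssim\|(-\sub)^{(\alpha+1)/2}f\|_p$. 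The endpoint $\alpha=1$ is then handled by complex interpolation of the \emph{operator} $f\mapsto Tf(x;y,r)=f(x\cdot\delta_r(y))-A^1_{x,r}(x\cdot\delta_r(y))$ between the Bochner spaces $L^p(\He^n,H_{\alpha_0})$ and $L^p(\He^n,H_{\alpha_1})$, using an analytic family built from $(I-\sub)^{z}$ only to compute the Sobolev-scale interpolation norm (Lemma \ref{interpolationLemma}). The homogeneous estimate is then recovered from the inhomogeneous one by a scaling argument (Lemma \ref{l:inhomog_homog}), which sidesteps any need to invert $(-\sub)^{\alpha/2}$ on $L^p$.

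There are substantive gaps in your plan that go beyond the one you flag. First, the proposed almost-orthogonality estimate is dimensionally inconsistent as written: the left side is a number depending on $x$ and $r$, while the right side is $\|h\|_{L^2}$; you would need either a pointwise bound in terms of a localized maximal function of $h$, or an operator-norm formulation on $L^2(dx\,dr/r)$, and the cross terms (dualize against another reproducing piece $Q_{s'}h'$) must be controlled, not just the diagonal. Second --- and this is the structural obstruction your interpolation plan does not address --- the square function $G_\alpha$ is \emph{discontinuous} in $\alpha$ at $\alpha=1$, because the approximating degree $\floor{\alpha}$ jumps from $0$ to $1$. An analytic family $\{T_z\}_{0<\Re z<2}$ with $T_z$ given by $\beta_{I_z g,\floor{\Re z}}$ is simply not analytic through $\Re z=1$, so Stein interpolation in the $z$-variable alone does not cover the case $\alpha=1$. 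The paper resolves exactly this by fixing the degree to $d=1$ on both endpoints of the interpolation strip (observing via Lemma \ref{lemma1} that $\beta_{f,1}\lesssim\beta_{f,0}$ to get the low endpoint). If you want to pursue your strategy, you would need a similar device: fix the degree, interpolate only the $H_\alpha$-scale, and handle the degree-zero endpoint by comparison. Third, the vector-valued Calder\'on--Zygmund step for $p\neq 2$ requires $L^2(dr/r)$-valued H\"ormander conditions for the kernel of $g\mapsto\{r^{-\alpha}(I_\alpha g-A^{\floor\alpha}_{x,r}(I_\alpha g))\chi_{B(x,r)}\}$; the projection $A^{\floor\alpha}_{x,r}$ has a nonlocal kernel (it averages over $B(x,r)$) whose cancellation with the Folland kernel of $I_\alpha$ must be quantified. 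None of this is automatic, and the paper's route avoids it entirely.
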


\begin{remark} We note that Theorem \ref{main} follows from the case $\alpha = 1$ of Theorem \ref{mainGeneral}, because $\|(-\sub)^{1/2}(f)\|_{L^{p}(\He^{n})} \sim \|\nabla_{\He} f\|_{L^{p}(\He^{n})}$ by \cite[(52)]{MR1828225}. \end{remark}

\subsection{Extensions and applications}\label{exAndAppl} Generalising the "$L^{1}$-based" numbers $\beta_{f,d}(B(x,r))$ defined in \eqref{betaDefinition}, one can consider the $L^{q}$-variants
\begin{displaymath} \beta_{f,d,q}(B(x,r)) = \left(\fint_{B(x,r)} |f(y) - A_{x,r}^{d}(y)|^{q} \, dy \right)^{1/q}, \qquad 1 \leq q < \infty.  \end{displaymath}
It is then possible to ask if and when Theorems \ref{main} and \ref{mainGeneral} continue to hold for these $\beta$-numbers. We do not here pursue the most general results: we only show in Section \ref{extensions} that Theorem \ref{main} holds for the numbers $\beta_{f,1,q}(B(x,r))$ if $1 < p < \infty$ and
\begin{displaymath} 1 \leq q < \min\left\{\frac{pQ}{Q - p},\frac{2Q}{Q - 2} \right\}, \end{displaymath}
where $Q = 2n + 2$. In particular, this range covers the case $p = q = 2$ which appears to be relevant for applications of Dorronsoro's theorem -- at least in Euclidean space, see for instance \cite[Section 10]{DS1}. The argument required for the extension is virtually the same as employed by Dorronsoro in \cite[Section 5]{MR796440}: one can literally reduce matters to Theorem \ref{main}. We will repeat the details in Section \ref{extensions}.

An explicit application in Heisenberg groups where the cases $q > 1$ come handy are certain \emph{horizontal vs. vertical Poincar\'e} inequalities, first established by Austin, Naor, and Tessera \cite{MR3095705}, later extended by Lafforgue and Naor \cite{MR3273443} and Naor and Young \cite{NY}. It turns out that many "non-endpoint" cases of these inequalities can be obtained as corollaries of Dorronsoro's theorem in the Heisenberg group. The matter will be further discussed in Section \ref{HvsVSection}.

We conclude the introduction with a few words on the proof structure of Theorem \ref{mainGeneral}. Section \ref{s:prelim} is mostly preparatory; notably, it reduces the "homogeneous" inequality \eqref{eq:general_est} to its "inhomogeneous" analogue, see Lemma \ref{l:inhomog_homog}. In Section \ref{alphaLessThanOne}, we prove Theorem \ref{mainGeneral} in the regime $0 < \alpha < 1$. Then, in Section \ref{alphaMoreThanOne}, we prove the case $1 < \alpha < 2$ by a reduction to the case $0 < \alpha < 1$. Finally, in Section \ref{interpolation}, we derive the case $\alpha = 1$ by complex interpolation.

Our proof strategy of Theorem \ref{mainGeneral} -- hence Theorem \ref{main} -- is exactly the same as in Dorronsoro's original work \cite{MR796440}. The main point here is to check that the use of horizontal Sobolev spaces in $\He^{n}$ produces no serious complications. The case $0<\alpha<1$ of Theorem \ref{mainGeneral} is essentially contained in \cite[p. 291 ff]{MR1828225}; the cases $1\leq \alpha <2$ involve approximation by polynomials of degree $d=1$, and these are not discussed in \cite{MR1828225}.

\section{Preliminaries}\label{s:prelim} We start by verifying that $A_{x,r}^{d}$ is always a near-optimal choice for the $L^{1}(B(x,r))$-approximation of $f$ by functions in $\calA_{d}$:
\begin{lemma}\label{lemma1} Let $x \in \He^{n}$, $r > 0$ and $f \in L^{1}(B(x,r))$. Then, for $d \in \{0,1\}$,
\begin{displaymath} \int_{B(x,r)} |f(y) - A^{d}_{x,r}(y)| \, dy \lesssim \int_{B(x,r)} |f(y) - A(y)| \, dy, \qquad A \in \calA_{d}. \end{displaymath}
\end{lemma}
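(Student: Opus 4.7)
The plan is to exploit the triangle inequality to reduce matters to a bound on the operator $A \mapsto A^{d}_{x,r}$ between finite-dimensional affine spaces, and then to transport this bound to the unit ball $B(0,1)$ via Heisenberg left-translation and dilation.

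First I would write, for arbitrary $A \in \calA_{d}$,
\begin{displaymath} \int_{B(x,r)} |f - A^{d}_{x,r}| \, dy \leq \int_{B(x,r)} |f - A| \, dy + \int_{B(x,r)} |A - A^{d}_{x,r}| \, dy, \end{displaymath}
so everything boils down to bounding the last integral by $\int_{B(x,r)} |f - A| \, dy$, up to a constant depending only on $n$ and $d$. The key observation is that if $P$ denotes the $L^{2}(B(x,r))$-orthogonal projection onto $\calA_{d}$, then by linearity $A - A^{d}_{x,r} = P(A) - P(f) = -P(f - A)$, since $A \in \calA_{d}$ is fixed by $P$. Thus it suffices to prove the operator bound
\begin{displaymath} \|P(g)\|_{L^{1}(B(x,r))} \lesssim \|g\|_{L^{1}(B(x,r))}, \qquad g \in L^{1}(B(x,r)), \end{displaymath}
with a constant independent of $x \in \He^{n}$ and $r > 0$.

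Next I would reduce to the case $(x,r) = (0,1)$ by a change of variables. Writing $y = x \cdot \delta_{r}y'$ (so $y' \in B(0,1)$ iff $y \in B(x,r)$) and $\tilde g(y') := g(x \cdot \delta_{r}y')$, the Jacobian is $r^{Q}$, $Q = 2n+2$, so $\fint_{B(x,r)} |g| \, dy = \fint_{B(0,1)} |\tilde g| \, dy'$. A short computation shows that $\calA_{d}$ is invariant under the pullback $A \mapsto \tilde A$: if $A(y) = a_{0} + a \cdot y^{(1)}$ (where $y^{(1)}$ denotes the first $2n$ components of $y$), then the first $2n$ components of $x \cdot \delta_{r}y'$ equal $x^{(1)} + r(y')^{(1)}$, so $\tilde A(y') = (a_{0} + a \cdot x^{(1)}) + r\, a \cdot (y')^{(1)} \in \calA_{d}$, and the map $A \mapsto \tilde A$ is a bijection of $\calA_{d}$. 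Testing the projection identity \eqref{affApprox} against $Q \in \calA_{d}$ and changing variables shows that $\widetilde{P(g)} = P_{0,1}(\tilde g)$, where $P_{0,1}$ is the analogous projection on $B(0,1)$. Hence the operator bound on $B(x,r)$ is equivalent to the same bound on $B(0,1)$.

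On $B(0,1)$ the bound is immediate: pick any $L^{2}(B(0,1))$-orthonormal basis $\{e_{i}\}_{i=1}^{N}$ of $\calA_{d}$. The $e_{i}$ are polynomials of degree at most $1$, hence bounded on $B(0,1)$, so
\begin{displaymath} \|P_{0,1}(\tilde g)\|_{L^{1}(B(0,1))} \leq \sum_{i=1}^{N} |\langle \tilde g, e_{i}\rangle| \, \|e_{i}\|_{L^{1}(B(0,1))} \leq C(n,d) \, \|\tilde g\|_{L^{1}(B(0,1))}. \end{displaymath}
Combining the steps yields the lemma. The only mildly delicate point is the verification that the projection $P$ really transforms equivariantly under the combined left-translation and Heisenberg dilation; once the explicit formulas for elements of $\calA_{d}$ under this change of variables are in place, the rest is routine.
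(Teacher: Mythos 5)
Your proposal is correct and follows essentially the same route as the paper's proof: triangle inequality plus linearity of $g\mapsto A^{d}_{x,r}(g)$ (with $A^d_{x,r}(A)=A$ for $A\in\calA_d$), followed by a uniform $L^1$ bound on this operator obtained by reducing to $B(0,1)$ via left-translation and dilation. The only cosmetic difference is in the final finite-dimensional step on $B(0,1)$: you expand in an $L^2$-orthonormal basis of $\calA_d$ and use that its elements are bounded, while the paper uses the self-duality identity $\|A^{d}_{0,1}\|_{L^2}^2=\int f\,A^{d}_{0,1}$ together with equivalence of norms on $\calA_d$ to get the $L^\infty$ bound \eqref{claim} directly; both are minor variants of the same argument.
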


For later use, we separately mention the following immediate corollary:
\begin{cor}\label{betaMonotonicity} Let $x_{1},x_{2} \in \He^{n}$ and $0 < r_{1} < r_{2} < \infty$ be such that $B(x_{1},r_{1}) \subset B(x_{2},r_{2})$ and $r_{2} \leq Cr_{1}$. Then
\begin{displaymath} \beta_{f,1}(B(x_{1},r_{1})) \lesssim_{C} \beta_{f,1}(B(x_{2},r_{2})). \end{displaymath}
\end{cor}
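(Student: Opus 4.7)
The plan is to deduce this as an essentially immediate consequence of Lemma \ref{lemma1}, combined with the volume-doubling property of Korányi balls. The key observation is that $A^{1}_{x_{2},r_{2}} \in \calA_{1}$ is a legitimate competitor in the optimisation problem that defines $A^{1}_{x_{1},r_{1}}$, so its $L^{1}$-error over the smaller ball controls the $L^{1}$-error of $A^{1}_{x_{1},r_{1}}$ (up to a constant). After that, containment $B(x_{1},r_{1}) \subset B(x_{2},r_{2})$ lets me pass the integral of $|f - A^{1}_{x_{2},r_{2}}|$ to the bigger ball, and the normalisation by $|B(x_{2},r_{2})|$ instead of $|B(x_{1},r_{1})|$ costs only a factor depending on $C$.

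Concretely, first I would apply Lemma \ref{lemma1} on the small ball $B(x_{1},r_{1})$ with the competitor $A = A^{1}_{x_{2},r_{2}}$ to obtain
\begin{displaymath}
\int_{B(x_{1},r_{1})} |f(y) - A^{1}_{x_{1},r_{1}}(y)|\, dy \lesssim \int_{B(x_{1},r_{1})} |f(y) - A^{1}_{x_{2},r_{2}}(y)|\, dy.
\end{displaymath}
Then I would use $B(x_{1},r_{1}) \subset B(x_{2},r_{2})$ to enlarge the domain of integration to $B(x_{2},r_{2})$ on the right-hand side. Dividing through by $|B(x_{1},r_{1})|$ gives
\begin{displaymath}
\beta_{f,1}(B(x_{1},r_{1})) \leq \frac{|B(x_{2},r_{2})|}{|B(x_{1},r_{1})|} \cdot \fint_{B(x_{2},r_{2})} |f(y) - A^{1}_{x_{2},r_{2}}(y)|\, dy,
\end{displaymath}
up to the absolute constant from Lemma \ref{lemma1}.

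The final step is to note that Lebesgue measure of a Korányi ball satisfies $|B(x,r)| = c_{n} r^{Q}$ with $Q = 2n + 2$ (this is a standard consequence of left-invariance of Lebesgue measure and the Korányi dilation structure), so the ratio $|B(x_{2},r_{2})|/|B(x_{1},r_{1})| = (r_{2}/r_{1})^{Q} \leq C^{Q}$ under the hypothesis $r_{2} \leq C r_{1}$. Combining this with the previous display yields $\beta_{f,1}(B(x_{1},r_{1})) \lesssim_{C} \beta_{f,1}(B(x_{2},r_{2}))$, as desired. There is no genuine obstacle in this argument: Lemma \ref{lemma1} does all the work, and the only thing to verify is the trivial volume comparison. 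The same proof would of course also cover $d=0$ if one wanted.
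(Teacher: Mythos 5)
Your argument is exactly the one the paper intends: apply Lemma \ref{lemma1} on $B(x_1,r_1)$ with competitor $A = A^1_{x_2,r_2}$, enlarge the domain of integration, and absorb the volume ratio $(r_2/r_1)^Q \leq C^Q$. The paper's proof is a one-line pointer to this same application of the lemma, so you have simply filled in the routine details correctly.
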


\begin{proof} Apply the lemma with $A_{x,r} = A_{x_{1},r_{1}}$ and $A = A_{x_{2},r_{2}}$. \end{proof}

\begin{proof}[Proof of Lemma \ref{lemma1}] We first claim that
\begin{equation}\label{claim} \|A_{x,r}^{d}\|_{L^{\infty}(B(x,r))} \lesssim \fint_{B(x,r)} |f(y)| \, dy. \end{equation}
One easily reduces to the case $B(x,r) = B(0,1) = B(1)$ by scalings and (left) translations, and using the uniqueness of the element $A \in \calA_{d}$ satisfying \eqref{affApprox}. Further, the equation
\begin{displaymath} \int_{B(1)} A^{d}_{0,1}(y)^{2} \, dy = \int_{B(1)} f(y) \cdot A^{d}_{0,1}(y) \, dy \end{displaymath}
follows from the definition \eqref{affApprox} of $A^{d}_{0,1}$, by choosing $A = A^{d}_{0,1}$. Consequently, using also the equivalence of all norms on the finite-dimensional space $\calA_{d}$, we infer that
\begin{displaymath} \|A^{d}_{0,1}\|^{2}_{L^{\infty}(B(1))} \sim \|A^{d}_{0,1}\|_{L^{2}(B(1))}^{2} \leq \|f\|_{L^{1}(B(1))} \|A^{d}_{0,1}\|_{L^{\infty}(B(1))}. \end{displaymath}
Rearranging the terms gives \eqref{claim}. We learned this quick argument from a paper of Prats, see \cite[Remark 2.4]{MR3677864}.

To complete the proof of the lemma, we write $A^{d}_{x,r}(g)$ for the function in $\calA_{d}$ corresponding to $g \in L^{1}(B(x,r))$, as in \eqref{betaDefinition}-\eqref{affApprox}. In particular, $g \mapsto A^{d}_{x,r}(g)$ is linear, and $A^{d}_{x,r}(A) = A$ for all $A \in \calA_{d}$. It follows that,
\begin{align*} \int_{B(x,r)} |f(y) - A^{d}_{x,r}(y)| \, dy & \leq \int_{B(x,r)} |f(y) - A(y)| \, dy + \int_{B(x,r)} |A(y) - A^{d}_{x,r}(y)| \, dy\\
& = \int_{B(x,r)} |f(y) - A(y)| \, dy + \int_{B(x,r)} |A^{d}_{x,r}(A - f)(y)| \, dy\\
& \lesssim \int_{B(x,r)} |f(y) - A(y)| \, dy, \end{align*}
using \eqref{claim} in the last inequality.
\end{proof}

Before the next remark, we record that $A^{1}_{x,r}(g)$ (as in the proof above) has the form
\begin{equation}\label{A1} A^{1}_{x,r}(g)(y) = b + \sum_{j = 1}^{2n} a_{j}(y_{j} - x_{j}), \qquad y = (y_{1},\ldots,y_{2n + 1}) \in \He^{n}, \end{equation}
where the coefficients $a_{j}$, $1 \leq j \leq 2n$, and $b$ are
\begin{displaymath} a_{j} = \frac{\fint_{B(x,r)} g(y)(y_{j} - x_{j}) \, dy}{\fint_{B(x,r)} (y_{j} - x_{j})^{2} \, dy} \quad \text{and} \quad b = \langle g \rangle_{B(x,r)}. \end{displaymath}
\begin{remark}\label{smoothApproximation} We discuss the sufficiency to prove Theorem \ref{mainGeneral} for $f$ in $\calD$, the space of smooth compactly supported functions on $\He^n$. By \cite[Theorem (4.5)]{MR0494315}, the functions in $\calD$ are dense in $S^{p}_{\alpha}$ for all $1 < p < \infty$ and $\alpha \geq 0$. So, if $f \in S^{p}_{\alpha}$, with $1 < p < \infty$ and $0 < \alpha < 2$, we may choose a sequence $\{f_{j}\}_{j \in \N} \subset \calD$ such that $\|f_{j} - f\|_{p,\alpha} \to 0$. In particular, $\|f_{j} - f\|_{L^{p}(\He^{n})} \to 0$, which easily implies that
\begin{displaymath} \beta_{f_{j},\floor{\alpha}}(B(x,r)) \to \beta_{f,\floor{\alpha}}(B(x,r)), \qquad x \in \He^{n}, \: r > 0, \: 0 < \alpha < 2, \end{displaymath}
as $j \to \infty$. To see this for $1 < \alpha < 2$, use the explicit expression for the maps $A^{1}_{x,r}$ obtained above (for $0 < \alpha < 1$ the claim is trivial, as $A^{0}_{x,r} \equiv \langle f \rangle_{B(x,r)}$). Then, if Theorem \ref{mainGeneral} has already been proved for some fixed $0 < \alpha < 2$, and for all functions in $\calD$, we infer from Fatou's lemma that
\begin{displaymath} \|G_{\alpha}f\|_{L^{p}(\He^{n})} \leq \liminf_{j \to \infty} \|G_{\alpha}f_{j}\|_{L^{p}(\He^{n})} \lesssim \liminf_{j \to \infty} \|(-\sub)^{\alpha/2}(f_j)\|_{L^{p}(\He^{n})} = \|(-\sub)^{\alpha/2}(f)\|_{L^{p}(\He^{n})}. \end{displaymath}
Hence, Theorem \ref{mainGeneral} follows for general $f \in S^{p}_{\alpha}$. \end{remark}

We conclude this section with one more reduction in the proof of Theorem \ref{mainGeneral}. The proof given below for Theorem \ref{mainGeneral} will initially produce the estimate
\begin{displaymath}
\|G_{\alpha}f\|_{L^{p}(\He^{n})} \lesssim \|f\|_{p,\alpha} = \|f\|_{L^{p}(\He^{n})} + \|(-\sub)^{\alpha/2}(f)\|_{L^{p}(\He^{n})},\quad f\in \calD,
\end{displaymath}
which is seemingly weaker than \eqref{eq:general_est}. (To be precise, this phenomenon will only occur in the case $\alpha = 1$, but that is the case most relevant for Theorem \ref{main}.) This is precisely the result Dorronsoro proves in \cite[Theorem 2]{MR796440}. However, the following homogeneity considerations allow us to remove the term $\|f\|_{L^{p}(\He^{n})}$ from the estimate.
\begin{lemma}\label{l:inhomog_homog}
Let $1 < p < \infty$ and $\alpha > 0$. If
\begin{equation}\label{inhom} \|G_{\alpha}f\|_{L^{p}(\He^{n})} \lesssim \|f\|_{p,\alpha},\quad f \in \calD,\end{equation}
then also
\begin{equation}\label{hom} \|G_{\alpha}f\|_{L^{p}(\He^{n})} \lesssim \|(-\sub)^{\alpha/2} f\|_{L^{p}(\He^{n})},\quad f \in \calD,\end{equation}
\end{lemma}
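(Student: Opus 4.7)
The plan is a straightforward scaling argument built on the homogeneity of $\He^n$. For $\lambda > 0$ let $\delta_{\lambda}(z,t) = (\lambda z, \lambda^{2} t)$ denote the Heisenberg dilation, set $f_{\lambda} := f \circ \delta_{\lambda}$, and write $Q = 2n+2$ for the homogeneous dimension. Since $f \in \calD$ implies $f_{\lambda} \in \calD$, the assumed inequality \eqref{inhom} applies equally well to $f_{\lambda}$.

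The first step is to record how each of the three norms in \eqref{inhom} behaves under $f \mapsto f_{\lambda}$. The Jacobian of $\delta_{\lambda}$ is $\lambda^{Q}$, so
\begin{displaymath}
\|f_{\lambda}\|_{L^{p}(\He^{n})} = \lambda^{-Q/p}\,\|f\|_{L^{p}(\He^{n})}.
\end{displaymath}
Since $\sub$ is $2$-homogeneous with respect to $\delta_{\lambda}$, the functional calculus yields $(-\sub)^{\alpha/2} f_{\lambda} = \lambda^{\alpha}\,[(-\sub)^{\alpha/2} f] \circ \delta_{\lambda}$, hence
\begin{displaymath}
\|(-\sub)^{\alpha/2} f_{\lambda}\|_{L^{p}(\He^{n})} = \lambda^{\alpha - Q/p}\,\|(-\sub)^{\alpha/2} f\|_{L^{p}(\He^{n})}.
\end{displaymath}
For the square function, the key observation is that $\calA_{\floor{\alpha}}$ is preserved by precomposition with the restriction of $\delta_{\lambda}$ to $\R^{2n}$, because this restriction acts by $y \mapsto \lambda y$ on the horizontal variables. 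Combined with the uniqueness characterisation \eqref{affApprox}, a linear change of variables gives $A^{\floor{\alpha}}_{x,r}(f_{\lambda})(y) = A^{\floor{\alpha}}_{\delta_{\lambda} x,\lambda r}(f)(\delta_{\lambda} y)$, and because $\delta_{\lambda}$ is a bijection $B(x,r) \to B(\delta_{\lambda} x, \lambda r)$ that rescales Lebesgue measure by a constant, this upgrades to $\beta_{f_{\lambda},\floor{\alpha}}(B(x,r)) = \beta_{f,\floor{\alpha}}(B(\delta_{\lambda} x, \lambda r))$. Substituting $s = \lambda r$ in the integral defining $G_{\alpha} f_{\lambda}(x)$ then produces $G_{\alpha} f_{\lambda}(x) = \lambda^{\alpha}(G_{\alpha} f)(\delta_{\lambda} x)$, whence
\begin{displaymath}
\|G_{\alpha} f_{\lambda}\|_{L^{p}(\He^{n})} = \lambda^{\alpha - Q/p}\,\|G_{\alpha} f\|_{L^{p}(\He^{n})}.
\end{displaymath}

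The second step is to insert these identities into \eqref{inhom} applied to $f_{\lambda}$ and divide through by $\lambda^{\alpha - Q/p}$, which yields
\begin{displaymath}
\|G_{\alpha} f\|_{L^{p}(\He^{n})} \lesssim \lambda^{-\alpha}\,\|f\|_{L^{p}(\He^{n})} + \|(-\sub)^{\alpha/2} f\|_{L^{p}(\He^{n})}.
\end{displaymath}
Since $\alpha > 0$, sending $\lambda \to \infty$ eliminates the first term and delivers \eqref{hom}. There is no genuine obstacle: everything is homogeneity bookkeeping, and the only point requiring mild care is the scaling of the $\beta$-numbers, which relies on the invariance of $\calA_{\floor{\alpha}}$ under the horizontal part of $\delta_{\lambda}$ together with the uniqueness clause in the definition of $A^{\floor{\alpha}}_{x,r}$.
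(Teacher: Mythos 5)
Your proof is correct and takes essentially the same route as the paper: rescale $f \mapsto f \circ \delta_\lambda$, track the exact $\lambda$-homogeneity of $\|f\|_{L^p}$, $\|G_\alpha f\|_{L^p}$, and $\|(-\sub)^{\alpha/2}f\|_{L^p}$, and let $\lambda \to \infty$ to kill the $\|f\|_{L^p}$ term. The only cosmetic difference is that the paper verifies the scaling of $(-\sub)^{\alpha/2}$ by unwinding Folland's heat-kernel definition of the fractional sub-Laplacian rather than invoking the functional calculus outright.
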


\begin{proof}
Given $f \in \calD$ and $s > 0$, set $f_s:= f \circ \delta_s \in \calD$, where $\delta_s$ denotes the usual Heisenberg dilation. Since the transformation $x \mapsto \delta_{s}(x)$ has Jacobian $s^{Q}$, with $Q = 2n + 2$, one has
\begin{equation}\label{eq:goal}
\|f_s\|_{L^p(\mathbb{H}^n)} = s^{-Q/p} \|f\|_{L^p(\mathbb{H}^n)}, \qquad s > 0.
\end{equation}
Below, we will moreover argue that
\begin{equation}\label{eq:goal 2} \begin{cases} \|G_{\alpha}f_s\|_{L^{p}(\He^{n})} =  s^{\alpha-Q/p}  \|G_{\alpha}f\|_{L^{p}(\He^{n})}, \text{ and} \\
\|(-\sub)^{\alpha/2} f_s\|_{L^{p}(\He^{n})} = s^{\alpha-Q/p}  \|(-\sub)^{\alpha/2} f\|_{L^{p}(\He^{n})}. \end{cases}
\end{equation}
Consequently, by \eqref{inhom},
\begin{align*} \|G_{\alpha}f\|_{L^{p}(\He^{n})} = s^{Q/p - \alpha}\|G_{\alpha}f_{s}\|_{L^{p}(\He^{n})} & \lesssim s^{Q/p - \alpha}[\|f_{s}\|_{L^{p}(\He^{n})} + \|(-\sub)^{\alpha/2}f_s\|_{L^{p}(\He^{n})}]\\
& = s^{-\alpha}\|f\|_{L^{p}(\He^{n})} + \|(-\sub)^{\alpha/2}f\|_{L^{p}(\He^{n})},  \end{align*}
and \eqref{hom} then follows by letting $s\to \infty$. To establish \eqref{eq:goal 2}, we first need to compute $G_{\alpha} f_s$, and for this purpose, we need expressions for the numbers $\beta_{f_s,d}$, with $d\in \mathbb{N}\cup \{0\}$ and $s > 0$. We observe that
\begin{displaymath}
A^d_{x,r}(f_s)(\delta_{1/s}(y))= A^d_{x,sr}(f)(y),\quad y\in \mathbb{H}^n.
\end{displaymath}
Again applying the change-of-variables formula, we find that
\begin{equation}\label{eq:beta1}
\beta_{f_s,d}(B(x,r)) = \beta_{f,d}(B(\delta_s(x),sr)).
\end{equation}
Thus, for all  $x\in \mathbb{H}^n$ and $\alpha, s >0$, one has
\begin{align*} G_{\alpha} f_s(x) & = \left(\int_0^{\infty} \left[\frac{1}{r^{\alpha}}\beta_{f_s,\floor{\alpha}}(B(x,r))\right]^2 \frac{dr}{r}\right)^{\frac{1}{2}}\\
&= s^{\alpha}\left(\int_0^{\infty} \left[\frac{1}{(sr)^{\alpha}}\beta_{f,\floor{\alpha}}(B(\delta_s(x),sr))\right]^2 \frac{sdr}{sr}\right)^{\frac{1}{2}}\\
&= s^{\alpha} G_{\alpha} f(\delta_s(x)).
\end{align*}
and now the first part of \eqref{eq:goal 2} follows from \eqref{eq:goal}.

Finally, to compute $ \|(-\sub)^{\alpha/2}(f_s)\|_{L^{p}(\He^{n})}$, we have to use the definition of the fractional sub-Laplacian \cite[p. 181]{MR0494315}, namely
\begin{equation}\label{eq:formula}
(-\sub)^{\alpha/2}(f) = \lim_{\epsilon \to 0}\frac{1}{\Gamma(k-\frac{\alpha}{2})} \int_{\epsilon}^{\infty} t^{k-(\alpha/2)-1}(-\triangle_{\mathbb{H}})^k H_t f \;dt,
\end{equation}
where $H_t f := f\ast h_t$ denotes convolution with the heat kernel $h_t(x):= h(x,t)$ and $k=\floor{\alpha/2}+1$. We recall from \cite[p.184 and Theorem 4.5]{MR0494315} that $\calD \subset \mathrm{Dom}((-{\sub}_{,p})^{\alpha/2})$ for all $1 < p < \infty$, and hence $(-{\sub}_{,p})^{\alpha/2}(f) = (-{\sub}_{,q})^{\alpha/2}(f)$ for $1<p,q<\infty$ and $f\in \calD$. Consequently, the limit in \eqref{eq:formula} exists in $L^p(\He^n)$ for any choice of $1<p<\infty$.
Exploiting the homogeneity
\begin{displaymath}
h(rx,r^2 t) = r^{-Q} h(x,t),
\end{displaymath}
we derive
\begin{align*}
H_t f_s(x) &= f_s \ast h_t(x) = \int h_t(y^{-1} x) f(\delta_s(y)) dy= \frac{1}{s^Q} \int h_t\left((\delta_{1/s}(y))^{-1} x\right)f(y) dy\\
&= \int h_{s^2 t} (y^{-1} \delta_s(x))  f(y) dy \\
&=( H_{s^2 t} f)(\delta_s(x)).
\end{align*}
Therefore,
\begin{displaymath}
(-\triangle_{\mathbb{H}})^k H_t f_s = s^{2k} \left[(-\triangle_{\mathbb{H}})^kH_{s^2 t} f\right] \circ \delta_s
\end{displaymath}
by the homogeneity of $\triangle_{\mathbb{H}}$, and so,
\begin{align*}
(-\sub)^{\alpha/2}(f_s)(x) & = \lim_{\epsilon \to 0}\frac{1}{\Gamma(k-\frac{\alpha}{2})} \int_{\epsilon}^{\infty} t^{k-\alpha/2-1}(-\triangle_{\mathbb{H}})^k H_t f_s(x) \;dt\\
&=  \lim_{\epsilon \to 0}\frac{1}{\Gamma(k-\frac{\alpha}{2})} \int_{\epsilon}^{\infty} t^{k-\alpha/2-1}(-\triangle_{\mathbb{H}})^k H_{s^2t} f(\delta_s(x)) s^{2k}\;dt\\
&= s^{\alpha} \lim_{\epsilon \to 0}\frac{1}{\Gamma(k-\frac{\alpha}{2})} \int_{\epsilon}^{\infty} (s^2t)^{k-\alpha/2-1}(-\triangle_{\mathbb{H}})^k H_{s^2t} f(\delta_s(x)) s^{2}\;dt.
\end{align*}
Then we introduce a new variable $u= s^2 t$, which yields
\begin{align}\label{eq:frac_lap}
\notag (-\sub)^{\alpha/2}(f_s)(x) &=s^{\alpha} \lim_{\epsilon \to 0}\frac{1}{\Gamma(k-\frac{\alpha}{2})} \int_{s^2 \epsilon}^{\infty} u^{-1/2}(-\triangle_{\mathbb{H}})^k H_{u} f(\delta_s(x)) du\\
&= s^{\alpha} (-\sub)^{\alpha/2}(f)(\delta_s(x))
\end{align}
Then, the second part of \eqref{eq:goal 2} follows again from \eqref{eq:goal}. \end{proof}

\section{The case $0 < \alpha < 1$}\label{alphaLessThanOne} We start by defining an auxiliary square function. Fix $0 < \alpha < 1$ and $1 < p < \infty$, and let $f \in \calD$. Write
\begin{displaymath} \calS_{\alpha}f(x) := \left(\int_{0}^{\infty} \left[\frac{1}{r^{\alpha}} \fint_{B(r)} |f(x \cdot y) - f(x)| \, dy \right]^{2} \, \frac{dr}{r} \right)^{1/2}, \end{displaymath}
where we have abbreviated $B(r) := B(0,r)$. Then, a special case of \cite[Theorem 5]{MR1828225} states that
\begin{equation}\label{form1} \|\calS_{\alpha}f\|_{L^{p}(\He^{n})} \lesssim \|(-\sub)^{\alpha/2}\|_{L^{p}(\He^{n})}, \qquad f \in \calD. \end{equation}
With \eqref{form1} in hand, the case $0 < \alpha < 1$ of Theorem \ref{mainGeneral} will (essentially) follow once we manage to control $G_{\alpha}f$ by $\calS_{\alpha}f$. To see this, first note that
\begin{displaymath} \fint_{B(x,r)} |f(y) - \langle f \rangle_{B(x,r)}| \, dy \leq \fint_{B(r)} \fint_{B(r)} |f(x \cdot y) - f(x \cdot y')| \, dy \, dy' \leq 2 \fint_{B(r)} |f(x \cdot y) - f(x)| \, dy \end{displaymath}
for all $x \in \He^{n}$ and $r > 0$. Consequently,
\begin{displaymath} G_{\alpha}f(x) = \left( \int_{0}^{\infty} \left[\tfrac{1}{r^{\alpha}} \fint_{B(x,r)} |f(y) - \langle f \rangle_{B(x,r)}| \, dy \right]^{2} \, \frac{dr}{r} \right)^{1/2} \leq 2\calS_{\alpha}f(x), \quad x \in \He^{n}, \end{displaymath}
cf.\ \cite[p. 291]{MR1828225}.
Combining this estimate with \eqref{form1} gives $\|G_{\alpha}f\|_{L^{p}(\He^{n})} \lesssim \|(-\sub)^{\alpha/2}f\|_{L^{p}(\He^{n})}$ for all $f \in \calD$, and the case $0 < \alpha < 1$ of Theorem \ref{mainGeneral} follows form Remark \ref{smoothApproximation}.

\section{The case $1 < \alpha < 2$}\label{alphaMoreThanOne}

This case will be reduced to the case $0 < \alpha < 1$. We start by recording the following result, which is a special case of \cite[Theorem 4.4.16(2)]{MR3469687}:

\begin{proposition} Let $1 < p < \infty$ and $\alpha > 0$. Then,
\begin{displaymath} \sum_{j = 1}^{2n} \|(-\sub)^{\alpha/2}X_{j}f\|_{L^{p}(\He^{n})} \lesssim \|(-\sub)^{(\alpha+1)/2}f\|_{L^{p}(\He^{n})} \qquad f \in \calD. \end{displaymath}
\end{proposition}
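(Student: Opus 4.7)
The plan is to reduce the proposition to the $L^{p}$-boundedness of the operator
\begin{displaymath}
T_{\alpha,j} := (-\sub)^{\alpha/2} X_{j}(-\sub)^{-(\alpha+1)/2}.
\end{displaymath}
For $f \in \calD$ the function $g := (-\sub)^{(\alpha+1)/2} f$ lies in $L^{p}(\He^{n})$, and formally $T_{\alpha,j} g = (-\sub)^{\alpha/2} X_{j} f$, so the desired inequality is precisely $\|T_{\alpha,j}\|_{L^{p} \to L^{p}} < \infty$.

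The natural first attempt is to factor $X_{j} = R_{j}(-\sub)^{1/2}$ through the horizontal Riesz transform $R_{j} := X_{j}(-\sub)^{-1/2}$, which is $L^{p}$-bounded for every $1 < p < \infty$ by Folland \cite{MR0494315}. This rewrites $T_{\alpha,j} = (-\sub)^{\alpha/2} R_{j} (-\sub)^{-\alpha/2}$, and if the fractional powers of $-\sub$ commuted with $R_{j}$ one would simply recover $R_{j}$ and be done. Unfortunately this commutation genuinely fails: in $\He^{1}$ a direct computation gives $[X_{1},\sub] = 2 X_{2} T \neq 0$, so neither $X_{j}$ nor $R_{j}$ is a spectral multiplier of $-\sub$, and a non-trivial commutator $[(-\sub)^{\alpha/2}, R_{j}]$ has to be estimated.

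The main obstacle is precisely this non-commutativity. The route taken in \cite[Chapter 4]{MR3469687} is to develop Sobolev spaces on graded Lie groups inside a full pseudo-differential calculus, in which $T_{\alpha,j}$ is shown to be of order zero and hence $L^{p}$-bounded via a noncommutative H\"ormander--Mikhlin theorem. A lower-tech alternative, sufficient for integer values of $\alpha + 1$, is to exploit the well-known characterisation of $S^{p}_{k}$ ($k \in \mathbb{N}$) in terms of $L^{p}$-norms of iterated horizontal derivatives $X_{i_{1}} \cdots X_{i_{m}} f$ with $m \leq k$, which makes the estimate elementary for integer $\alpha$; the fractional cases would then follow by complex interpolation between consecutive integer orders, in the same spirit as Section \ref{interpolation} of the present paper.
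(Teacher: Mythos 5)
Your proposal arrives at the same resolution as the paper: the paper gives no proof at all, merely citing Theorem 4.4.16(2) of Fischer and Ruzhansky \cite{MR3469687}, which is exactly the reference you invoke after diagnosing the difficulty. That diagnosis is accurate and worth having on record -- writing $T_{\alpha,j}=(-\sub)^{\alpha/2}X_j(-\sub)^{-(\alpha+1)/2}$ and factoring $X_j=R_j(-\sub)^{1/2}$ only trades the problem for the commutator $[(-\sub)^{\alpha/2},R_j]$, and since $[X_j,\sub]$ is a nonzero second-order operator involving the central derivative, the Riesz transforms are not spectral multipliers of $-\sub$ and this commutator does not vanish.

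Your sketched lower-tech alternative is a genuinely different route from the paper's bare citation and is sound in outline. For integer $\alpha$, Folland's equivalence $\|g\|_{p,k}\sim\sum_{|I|\le k}\|X^I g\|_{L^p(\He^n)}$ gives
\begin{displaymath}
\|(-\sub)^{\alpha/2}X_jf\|_{L^p(\He^n)} \lesssim \|X_jf\|_{p,\alpha} \lesssim \sum_{|I|\le\alpha+1}\|X^I f\|_{L^p(\He^n)} \lesssim \|f\|_{p,\alpha+1},
\end{displaymath}
and the spurious $\|f\|_{L^p(\He^n)}$ term is removed by the dilation argument of Lemma~\ref{l:inhomog_homog}. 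One caveat concerns the interpolation step: the paper's Lemma~\ref{interpolationLemma} furnishes only the one-sided embedding $S^p_{\alpha_\theta}\hookrightarrow[S^p_{\alpha_0},S^p_{\alpha_1}]_\theta$, which is all that is needed there, but to interpolate the fixed map $X_j\colon S^p_{\alpha+1}\to S^p_\alpha$ between integer orders you would also need the reverse inclusion, i.e.\ the full identification $[S^p_{\alpha_0},S^p_{\alpha_1}]_\theta=S^p_{\alpha_\theta}$ (true, but not established in this paper). Alternatively, and more in the spirit of Lemma~\ref{interpolationLemma}, you could run Stein interpolation directly on the analytic family $z\mapsto(-\sub)^{z/2}X_j(-\sub)^{-(z+1)/2}$, controlling the imaginary powers $(-\sub)^{it}$ on $L^p$ via Folland's Proposition~(3.14) exactly as the paper does. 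Either variant closes the sketch.
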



In addition, we will need the following $\He^{n}$-analogue of \cite[Theorem 5]{MR796440}:
\begin{proposition}\label{dorrProp} Let $1 < p < \infty$ and $0 < \alpha < 1$. Then,
\begin{displaymath} \|G_{\alpha + 1}f\|_{L^{p}(\He^{n})} \lesssim \sum_{j = 1}^{2n} \|G_{\alpha}(X_{j}f)\|_{L^{p}(\He^{n})}, \qquad f \in \calD. \end{displaymath}
\end{proposition}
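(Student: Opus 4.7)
The plan is to establish the pointwise $\beta$-number domination
\begin{displaymath}
\beta_{f,1}(B(x,r)) \lesssim r \sum_{j=1}^{2n} \beta_{X_j f, 0}(B(x,r)), \qquad x \in \He^{n},\ r > 0, \qquad (\star)
\end{displaymath}
for $f \in \calD$. Since $\floor{\alpha+1} = 1$ and $\floor{\alpha} = 0$ for $0 < \alpha < 1$, dividing $(\star)$ by $r^{\alpha+1}$, squaring, integrating against $dr/r$, and taking square roots yields the pointwise bound $G_{\alpha+1}f(x) \lesssim \sum_{j=1}^{2n} G_{\alpha}(X_j f)(x)$, from which the proposition follows by the triangle inequality in $L^p$.

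To prove $(\star)$, fix $x \in \He^n$ and $r > 0$, write $B := B(x,r)$, and construct an explicit affine competitor $Q \in \calA_1$. Set $c_j := \langle X_j f \rangle_B$ for $1 \leq j \leq 2n$, and pick $c_0 \in \R$ so that
\begin{displaymath}
Q(y) := c_0 + \sum_{j=1}^{2n} c_j (y_j - x_j) \qquad \text{satisfies} \qquad \langle Q \rangle_B = \langle f \rangle_B.
\end{displaymath}
The key algebraic observation is that $X_j Q \equiv c_j$ on all of $\He^n$: each left-invariant horizontal vector field has the form $X_j = \partial_{y_j} + \ell_j(y)\,\partial_{y_{2n+1}}$ with $\ell_j$ linear in the horizontal coordinates, and since $Q$ depends only on the first $2n$ variables, the vertical part contributes nothing. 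In particular,
\begin{displaymath}
X_j(f-Q) = X_j f - \langle X_j f \rangle_B, \qquad 1 \leq j \leq 2n.
\end{displaymath}

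The horizontal $(1,1)$-Poincar\'e inequality on Kor\'anyi balls, applied to $f - Q$ (which has zero $B$-average by construction), then gives
\begin{displaymath}
\fint_B |f - Q|\,dy \lesssim r \fint_B |\nabla_{\He}(f - Q)|\,dy \leq r \sum_{j=1}^{2n} \beta_{X_j f, 0}(B),
\end{displaymath}
while Lemma \ref{lemma1} provides $\beta_{f,1}(B) \lesssim \fint_B |f - Q|\,dy$. Combining these two estimates proves $(\star)$.

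The argument is fairly routine once the Heisenberg Poincar\'e inequality is granted; the one point requiring care is the algebraic identity $X_j Q = c_j$, which relies crucially on $Q$ being independent of the last coordinate and thereby explains -- as advertised in the abstract -- why the theorem is phrased in terms of affine approximating polynomials that do not depend on $y_{2n+1}$.
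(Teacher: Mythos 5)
Your argument is correct and follows the same route as the paper: construct an explicit competitor in $\calA_1$ with slopes given by averages of the $X_jf$, exploit that $X_jQ$ is constant because $Q$ does not depend on the vertical coordinate, apply Lemma~\ref{lemma1} and the horizontal $1$-Poincar\'e inequality, and then pass from the pointwise $\beta$-number estimate to the square functions. The one difference is worth flagging: you invoke the \emph{strong} $(1,1)$-Poincar\'e inequality on Kor\'anyi balls (same ball on both sides), with $c_j := \langle X_jf\rangle_{B(x,r)}$, so that the right-hand side is literally $\beta_{X_jf,0}(B(x,r))$. The paper instead cites Jerison's \emph{weak} Poincar\'e inequality (larger ball $B(x,Cr)$ on the right) and, to make the right-hand side come out exactly as $\beta_{X_jf,0}(B(x,Cr))$, chooses the slopes of $\tilde{A}_{x,r}$ as averages of $X_jf$ over the \emph{enlarged} ball $B(x,Cr)$ rather than over $B(x,r)$. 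Your version is fine since the weak Poincar\'e inequality self-improves to the strong one on quasiconvex doubling spaces (and the Kor\'anyi metric is bi-Lipschitz to the geodesic CC metric), but that upgrade is an extra fact not contained in the reference the paper uses; the paper's choice of averaging the slopes over the larger ball sidesteps the issue entirely, which is a small but tidy trick. Everything else, including the key observation about $X_jQ \equiv c_j$ and the reduction from $(\star)$ to the $L^p$ statement, matches the paper's argument.
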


We can now complete the proof of Theorem \ref{mainGeneral} in the case $1 < \alpha < 2$ as follows. Fix $1 < \alpha < 2$ and $f \in \calD$. Combining the two propositions above with the case $0 < \alpha < 1$ of Theorem \ref{mainGeneral}, we infer that
\begin{displaymath} \|G_{\alpha}f\|_{L^{p}(\He^{n})} \lesssim \sum_{j = 1}^{2n} \|(-\sub)^{(\alpha-1)/2}X_{j}f\|_{L^{p}(\He^{n})} \lesssim \|(-\sub)^{\alpha/2}f\|_{L^{p}(\He^{n})}, \end{displaymath}
as desired. So, it remains to establish Proposition \ref{dorrProp}.

\begin{proof}[Proof of Proposition \ref{dorrProp}] Let $0 < \alpha < 1$, $f \in \calD$, and recall that
\begin{displaymath} G_{\alpha + 1}f(x) = \left(\int_{0}^{\infty} \left[ \tfrac{1}{r^{\alpha + 1}} \beta_{f,1}(B(x,r)) \right]^{2} \, \frac{dr}{r} \right)^{1/2}, \end{displaymath}
where
\begin{displaymath} \beta_{f,1}(B(x,r)) = \fint_{B(x,r)} |f(y) - A^{1}_{x,r}(y)| \, dy. \end{displaymath}
Now, we define a function $\tilde{A}_{x,r} \in \calA_{1}$ so that the average of $\tilde{A}_{x,r}$ equals the average of $f$ on $B(x,r)$, and the average of $\nabla_{\He} \tilde{A}_{x,r}$ equals the average of $\nabla_{\He} f$ in a larger ball $B(x,Cr)$, where $C \geq 1$ will be chosen momentarily. Formally,
\begin{displaymath} \tilde{A}_{x,r}(z,t) := \langle f \rangle_{B(x,r)} + \langle \nabla_{\He} f \rangle_{B(x,Cr)} \cdot (z - z_{0}), \end{displaymath}
where $x = (z_{0},t_{0})$, and
\begin{displaymath} \langle \nabla_{\He} f \rangle_{B(x,Cr)} = \left( \langle X_{1} f \rangle_{B(x,Cr)},\ldots,\langle X_{2n} f \rangle_{B(x,Cr)} \right). \end{displaymath}
Then, noting that
\begin{displaymath} \langle f - \tilde{A}_{x,r} \rangle_{B(x,r)} = \langle \nabla_{\He} f \rangle_{B(x,Cr)} \cdot \fint_{B(x,r)} (z - z_{0}) \, d(z,t) = 0 , \end{displaymath}
and using Lemma \ref{lemma1} and the weak $1$-Poincar\'e inequality, see \cite{MR850547}, we obtain
\begin{align*} \beta_{f,1}(B(x,r)) & \lesssim \fint_{B(x,r)} |f(y) - \tilde{A}_{x,r}(y)| \, dy\\
& \lesssim r \fint_{B(x,Cr)} |\nabla_{\He}f(y) - \nabla_{\He} \tilde{A}_{x,r}(y)| \, dy\\
& \lesssim r \sum_{j = 1}^{2n} \fint_{B(x,Cr)} |X_{j}f(y) - \langle X_{j}f \rangle_{B(x,Cr)} | \, dy\\
& = r \sum_{j = 1}^{2n} \beta_{X_{j}f,0}(B(x,Cr)). \end{align*}
The preceding holds if $C \geq 1$ was chosen large enough. This implies that
\begin{displaymath} G_{\alpha + 1}f(x) \lesssim \sum_{j = 1}^{2n} \left( \int_{0}^{\infty} \left[\tfrac{1}{r^{\alpha}} \beta_{X_{j}f,0}(B(x,r)) \right]^{2} \, \frac{dr}{r} \right)^{1/2} = \sum_{j = 1}^{2n} G_{\alpha}(X_{j}f)(x), \quad x \in \He, \end{displaymath}
as claimed. \end{proof}

\section{Interpolation and the case $\alpha = 1$}\label{interpolation}

To handle the case $\alpha = 1$, we use \emph{complex interpolation}, see for instance \cite{MR0482275}.
In order to get the machinery started, we first observe that  $(S_{\alpha_{0}}^{p},S^{p}_{\alpha_{1}})$, $0<\alpha_0<\alpha_1<\infty$ is a \emph{compatible couple} (or \emph{interpolation pair} in the sense of Calder\'{o}n \cite{MR0167830}). That is,  $S_{\alpha_{0}}^{p}$ and $S^{p}_{\alpha_{1}}$ are both Banach spaces and they embed continuously in the space $\mathcal{S}'(\He^n)$ of tempered distributions on $\He^n$, see \cite[Theorem 4.4.3(4)]{MR3469687}.
Thus we can define the complex interpolation space $[S^{p}_{\alpha_{0}},S^{p}_{\alpha_{1}}]_{\theta}$ for $\theta \in (0,1)$.


\begin{lemma}\label{interpolationLemma} Let $1<p<\infty$, $\alpha_{0},\alpha_{1} \in (0,\infty)$ with $\alpha_{0} < \alpha_{1}$, fix $\theta \in (0,1)$, and let $\alpha_{\theta} := (1 - \theta)\alpha_{0} + \theta \alpha_{1}$. Then, every $f \in \calD$ satisfies
\begin{displaymath} \|f\|_{[S^{p}_{\alpha_{0}},S^{p}_{\alpha_{1}}]_{\theta}} \lesssim e^{\frac{\pi (\alpha_{1} - \alpha_{0})}{4}\sqrt{\theta(1 - \theta)}} \|f\|_{p,\alpha_{\theta}}. \end{displaymath} \end{lemma}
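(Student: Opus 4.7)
The plan is to invoke the standard complex interpolation recipe: I would construct an analytic family $F \colon \overline{S} \to S^{p}_{\alpha_{0}} + S^{p}_{\alpha_{1}}$, where $\overline{S} = \{z \in \C : 0 \leq \Rea(z) \leq 1\}$, with $F(\theta) = f$, and then bound the suprema of $\|F(it)\|_{p,\alpha_{0}}$ and $\|F(1+it)\|_{p,\alpha_{1}}$ over $t \in \R$. The carefully chosen family is
\begin{displaymath} F(z) := e^{\gamma(\theta(1-\theta) - z(1-z))} \cdot (-{\sub})^{(\alpha_{\theta} - \alpha_{z})/2} f, \qquad \alpha_{z} := (1-z)\alpha_{0} + z\alpha_{1}, \end{displaymath}
where $\gamma > 0$ is a free parameter to be optimised at the end. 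Clearly $F(\theta) = f$, and analyticity in the strip, together with continuity up to the boundary, follows from the functional calculus underlying formula \eqref{eq:formula} applied to $f \in \calD$.

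The analytic input I would import is the $L^{p}$-boundedness of the imaginary powers of the (inhomogeneous) sub-Laplacian:
\begin{displaymath} \|(\mathrm{Id} - {\sub})^{is}\|_{L^{p} \to L^{p}} \lesssim_{p} e^{(\pi/2)|s|}, \qquad s \in \R, \: 1 < p < \infty, \end{displaymath}
which on stratified groups is classical, for instance as a consequence of the analyticity of the heat semigroup on $L^{p}(\He^{n})$ together with a Mauceri--Meda type multiplier theorem (see, e.g., \cite[Theorem 4.4.17]{MR3469687}). I would then pass to the equivalent Bessel norm $\|h\|_{p,\alpha} \sim \|(\mathrm{Id}-{\sub})^{\alpha/2}h\|_{L^{p}}$, set $\tilde{g} := (\mathrm{Id}-{\sub})^{\alpha_{\theta}/2}f \in L^{p}$ with $\|\tilde{g}\|_{L^{p}} \sim \|f\|_{p,\alpha_{\theta}}$, and commute the functional calculus so that the operator acting on $\tilde{g}$ is a pure imaginary power $(\mathrm{Id}-{\sub})^{-it(\alpha_{1}-\alpha_{0})/2}$.

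A direct computation shows $\Rea(z(1-z)) = s(1-s) + t^{2}$ for $z = s + it$, so on each boundary line $\Rea(z) = j \in \{0,1\}$ one has $|e^{\gamma(\theta(1-\theta) - z(1-z))}| = e^{\gamma(\theta(1-\theta) - t^{2})}$. Combining with the imaginary-power bound,
\begin{displaymath} \|F(j + it)\|_{p, \alpha_{j}} \lesssim e^{\gamma\theta(1-\theta) - \gamma t^{2}} \cdot e^{\pi(\alpha_{1}-\alpha_{0})|t|/4} \cdot \|f\|_{p,\alpha_{\theta}}, \qquad j \in \{0, 1\}. \end{displaymath}
Maximising the $t$-factor (at $|t| = \pi(\alpha_{1}-\alpha_{0})/(8\gamma)$) produces the bound $e^{\gamma\theta(1-\theta) + \pi^{2}(\alpha_{1}-\alpha_{0})^{2}/(64\gamma)}$ on both boundary lines, hence
\begin{displaymath} \|f\|_{[S^{p}_{\alpha_{0}},S^{p}_{\alpha_{1}}]_{\theta}} \lesssim e^{\gamma\theta(1-\theta) + \pi^{2}(\alpha_{1}-\alpha_{0})^{2}/(64\gamma)} \|f\|_{p,\alpha_{\theta}}. \end{displaymath}

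To conclude I would optimise by choosing $\gamma := \pi(\alpha_{1}-\alpha_{0})/(8\sqrt{\theta(1-\theta)})$, which balances the two terms in the exponent; by AM--GM the minimum value is $\pi(\alpha_{1}-\alpha_{0})\sqrt{\theta(1-\theta)}/4$, exactly as claimed. The main obstacle I anticipate is simply documenting the exponential bound on imaginary powers with the correct constant $\pi/2$ in the Heisenberg setting (and verifying that one may work interchangeably with $(-\sub)^{\alpha/2}$ and the Bessel operator $(\mathrm{Id}-{\sub})^{\alpha/2}$ without changing the implicit constants); once this is in hand, the analyticity of $F$ and the identity $F(\theta) = f$ are routine consequences of the Folland functional calculus.
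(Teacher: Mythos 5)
Your approach is essentially the same as the paper's: build an analytic family $\Phi_M(z) = e^{M(\theta(1-\theta)-z(1-z))}\,(I-\sub)^{(\alpha_\theta-\alpha_z)/2}f$ on the strip, bound the boundary norms via the $L^p$-boundedness of imaginary powers (Folland's $\|(I-\sub)^{is}\|_{L^p\to L^p}\lesssim_p |\Gamma(1-is)|^{-1}\lesssim e^{\pi|s|/2}$ plus Stirling), and optimise over the free parameter $M$ to produce the $e^{\frac{\pi(\alpha_1-\alpha_0)}{4}\sqrt{\theta(1-\theta)}}$ constant. The one thing to fix: you define $F$ with powers of $(-\sub)$ but then pass to the Bessel norm $\|(I-\sub)^{\alpha_j/2}\cdot\|_{L^p}$ and claim the operator acting on $\tilde g$ is a pure imaginary power of $(I-\sub)$. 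That is not quite what comes out -- with $(-\sub)$ in $F$ and $(I-\sub)$ in the norm you get an additional factor of the form $(I-\sub)^{-a}(-\sub)^{a}$ (with $a$ real, sign depending on the boundary line), whose $L^p$-boundedness is true but is an extra multiplier estimate you would have to import. The paper avoids this entirely by using $(I-\sub)$ \emph{both} in the definition of $\Phi_M$ and in the equivalent norm $\|h\|_{p,\alpha}\sim\|(I-\sub)^{\alpha/2}h\|_{L^p}$, so that the product of Bessel powers collapses cleanly to $(I-\sub)^{it(\alpha_1-\alpha_0)/2}(I-\sub)^{\alpha_\theta/2}f$, leaving only the imaginary-power bound. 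In short: do not try to ``work interchangeably'' with $(-\sub)$ and $(I-\sub)$ -- just use $(I-\sub)$ consistently and the awkward step disappears.
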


The proof is otherwise the same as in \cite[Lemma 34]{MR3533305}, except that the domain is $\He^{n}$ in place of $\R^{n}$, so we need to use a few results from \cite{MR0494315}, and we work with non-homogeneous Sobolev spaces. It is convenient to use a norm on $S^p_{\alpha}$ that is different from, but equivalent to, the norm in Definition \ref{d:Sobolev}. According to  \cite[Proposition 4.1]{MR0494315}, we have for $1<p<\infty$ and $\alpha \geq 0$ that
\begin{displaymath}
\|f\|_{p,\alpha} \sim \|(I- \sub)^{\alpha/2}f\|_{L^p(\He^n)},\quad f \in S^p_{\alpha},
\end{displaymath}
where $I$ denotes the identity operator on $L^{p}(\He^{n})$.

\begin{proof}[Proof of Lemma \ref{interpolationLemma}] Let $U := \{z \in \C : \Rea z \in (0,1)\}$, and fix a parameter $M > 0$ to be specified later. Fix $f \in \calD$, and define the following map $\Phi_{M} \colon \overline{U} \to S^{p}_{\alpha_{0}} + S^{p}_{\alpha_{1}}$:
\begin{displaymath} \Phi_{M}(z) := e^{M(z(z - 1) - \theta(\theta - 1))}(I-\sub)^{\frac{\alpha_{\theta} - \alpha_{0} - z(\alpha_{1} - \alpha_{0})}{2}} f, \qquad z \in \overline{U}. \end{displaymath}
To justify that $\Phi_M$ maps into  $S^{p}_{\alpha_{0}} + S^{p}_{\alpha_{1}}$, we observe that $\Phi_M(z)\in \calS$, the Schwartz class in $\He^{n}$, since $f\in \calD \subset \calS$ (see for instance the last paragraph of the proof of \cite[Corollary 4.3.16]{MR3469687}). Then it suffices to note that $\calS \subset S^p_{\alpha_0}\cap S^p_{\alpha_1} \subset S^{p}_{\alpha_{0}} + S^{p}_{\alpha_{1}}$; this is a special case of \cite[Lemma 4.4.1]{MR3469687}.

By \cite[Theorem (3.15)(iv)]{MR0494315}, $\Phi_{M}$ is an analytic $L^{p}$-valued function on $U$, satisfying $\Phi_{M}(\theta) = f$, and hence
\begin{equation}\label{form2} \|f\|_{[S^{p}_{\alpha_{0}},S^{p}_{\alpha_{1}}]_{\theta}} \leq \inf_{M > 0} \max \left\{\sup_{t \in \R} \|\Phi_{M}(it)\|_{p,\alpha_{0}},\sup_{t \in \R} \|\Phi_{M}(1 + it)\|_{p,\alpha_{1}} \right\} \end{equation}
by the definition of the norm in the complex interpolation space $[S^{p}_{\alpha_{0}},S^{p}_{\alpha_{1}}]_{\theta}$. It remains to estimate the norms on the right hand side of \eqref{form2}. Using the equation $(I-\sub)^{\alpha + \beta}f = (I-\sub)^{\alpha}(I-\sub)^{\beta} f$, see \cite[Theorem (3.15)(iii)]{MR0494315}, we find that
\begin{displaymath} (I-\sub)^{\frac{\alpha_{0} + s(\alpha_{1} - \alpha_{0})}{2}}\Phi_{M}(s - it) = e^{Ms(s - 1) + M\theta(1 - \theta) - Mt^{2} - iM(2s - 1)t}(I-\sub)^{\frac{it(\alpha_{1} - \alpha_{0})}{2}} (I-\sub)^{\frac{\alpha_{\theta}}{2}} f. \end{displaymath}
It follows that
\begin{displaymath} \| \Phi_{M}(s - it)\|_{p,\alpha_{0} + s(\alpha_{1} - \alpha_{0})} \lesssim e^{Ms(s - 1) + M\theta(1 - \theta) - Mt^{2}}\left\| (I-\sub)^{\frac{it(\alpha_{1} - \alpha_{0})}{2}} \right\|_{L^{p}(\He^{n}) \to L^{p}(\He^{n})} \|f\|_{p,\alpha_{\theta}}. \end{displaymath}
Next, we recall from \cite[Proposition (3.14)]{MR0494315} the bound
\begin{displaymath} \left\| (I-\sub)^{\frac{it(\alpha_{1} - \alpha_{0})}{2}} \right\|_{L^{p}(\He^{n}) \to L^{p}(\He^{n})} \lesssim_{p} \left|\Gamma\left(1 - \frac{it(\alpha_{1} - \alpha_{0})}{2} \right) \right|^{-1} \lesssim e^{\frac{\pi |t|(\alpha_{1} - \alpha_{0})}{4}}, \end{displaymath}
where the latter estimate follows from Stirling's formula (see also \cite[(79)]{MR3533305}). We have now reached a point corresponding to \cite[(107)]{MR3533305}; the remainder of the proof no longer uses (Heisenberg specific) results from \cite{MR0494315} and can be completed as in \cite{MR3533305}. \end{proof}

The second piece of information we need is a standard result from complex interpolation of Banach-space valued $L^{p}$ functions. Here we follow \cite{MR796440} almost verbatim.
Let $B(1)$ denote the Kor\'{a}nyi unit ball centered at $0\in \He^n$. For $\alpha \in (0,\infty)$, we first define the Banach space $H_{\alpha}$ of functions $F \colon (0,\infty) \times B(1) \to \R$ with
\begin{displaymath} \|F\|_{H_{\alpha}} := \left(\int_{0}^{\infty} \left[ \tfrac{1}{r^{\alpha}} \fint_{B(1)} |F(y,r)| \, dy \right]^{2} \, \frac{dr}{r} \right)^{1/2} < \infty. \end{displaymath}
Then, for $1 < p < \infty$, we denote by $L^{p}(\He^{n},H_{\alpha})$ the space of functions $\Psi \colon \He^{n} \to H_{\alpha}$ with
\begin{displaymath} \|\Psi\|_{L^{p}(\He^{n},H_{\alpha})} := \left(\int_{\He^{n}} \|\Psi(x)\|^{p}_{H_{\alpha}} \, dx \right)^{1/p} < \infty. \end{displaymath}
To apply complex interpolation, we have to verify that if $0 < \alpha_{0} < \alpha_{1} < \infty$, then $(L^{p}(\He^{n},H_{\alpha_0}),L^{p}(\He^{n},H_{\alpha_1}))$ is a compatible couple. Indeed, it follows from H\"older's inequality that
\begin{displaymath}
\int_{K_1} \int_{K_2} \int_{B(1)} |\Psi(x;y,r)| \, dy \, dr \, dx \lesssim_{\alpha,K_1,K_2,p} \|\Psi\|_{L^{p}(\He^{n},H_{\alpha})}
\end{displaymath}
for every compact set $K_1\times K_2 \subset \He^n \times (0,+\infty)$, and for all $\alpha \geq 0$ and $\Psi \in L^{p}(\He^{n},H_{\alpha})$. This shows that the Banach spaces $L^{p}(\He^{n},H_{\alpha_0})$ and $L^{p}(\He^{n},H_{\alpha_1})$ both embed continuously into $L^1_{\mathrm{loc}}(\He^n \times (0,+\infty) \times B(1))$.

As in \cite{MR796440}, we infer from the \cite[p. 107, 121]{MR0482275} that if $0 < \alpha_{0} < \alpha_{1} < \infty$ and $\theta \in (0,1)$, then
\begin{equation}\label{form3} [L^{p}(\He^{n},H_{\alpha_{0}}),L^{p}(\He^{n},H_{\alpha_{1}})]_{\theta} = L^{p}(\He^{n},[H_{\alpha_{0}},H_{\alpha_{1}}]_{\theta}) = L^{p}(\He^{n},H_{(1 - \theta)\alpha_{0} + \theta \alpha_{1}}). \end{equation}
In the proof of \eqref{form3}, there is no difference between $\R^{n}$ and $\He^{n}$. We will use \eqref{form3} for any parameters $0 < \alpha_{1} < 1 < \alpha_{2} < \infty$ and $\theta \in (0,1)$ such that $1 = (1 - \theta)\alpha_{0} + \theta \alpha_{1}$. We fix such parameters for the rest of the argument. Then, we consider the linear map
\begin{displaymath} T \colon f \mapsto Tf(x; y,r) = f(x \cdot \delta_{r}(y)) - A^{1}_{x,r}(x \cdot \delta_{r}(y)), \quad f \in S^p_{\alpha_0}+S^p_{\alpha_1}, \end{displaymath}
where $x,y \in \He^{n}$, $r > 0$, and $A^{1}_{x,r} = A^{1}_{x,r}(f)$ as in \eqref{affApprox}.
We already know that $T$ is a bounded operator $S^{p}_{\alpha_{1}} \to L^{p}(\He^{n},H_{\alpha_{1}})$, since
\begin{align} \int_{\He^{n}} \|Tf(w)\|_{H_{\alpha_{1}}}^{p} \, dw & = \int_{\He^{n}} \left( \int_{0}^{\infty} \left[\tfrac{1}{r^{\alpha_{1}}} \fint_{B(1)} |f(x \cdot \delta_{r}(y)) - A^{1}_{x,r}(x \cdot \delta_{r}(y))| \, dy \right]^2 \, \frac{dr}{r} \right)^{p/2} \, dx \notag\\
&\label{form5} = \int_{\He^{n}} \left( \int_{0}^{\infty} \left[\tfrac{1}{r^{\alpha_{1}}} \fint_{B(x,r)} |f(y) - A^{1}_{x,r}(y)| \, dy \right]^2 \, \frac{dr}{r} \right)^{p/2} \, dx\\
& = \int_{\He^{n}} \left[ G_{\alpha_{1}}f(x) \right]^{p} \, dx \lesssim \|f\|_{p,\alpha_{1}}^{p} \notag \end{align}
by the case $1 < \alpha < 2$ of Theorem \ref{mainGeneral}. In fact, we also know that $T$ is a bounded operator $S^{p}_{\alpha_{0}} \to L^{p}(\He^{n},H_{\alpha_{0}})$. This follows from the calculation above with "$\alpha_{0}$" in place of "$\alpha_{1}$", and also plugging in the estimate
\begin{displaymath} \fint_{B(x,r)} |f(y) - A^{1}_{x,r}(y)| \, dy \lesssim \fint_{B(x,r)} |f(y) - A^{0}_{x,r}(y)| \, dy, \end{displaymath}
after line \eqref{form5} (this is immediate from Lemma \ref{lemma1}).

Now, it follows by complex interpolation that $T$ is a bounded operator
\begin{displaymath} [S^{p}_{\alpha_{0}},S^{p}_{\alpha_{1}}]_{\theta} \to [L^{p}(\He^{n},H_{\alpha_{0}}),L^{p}(\He^{n},H_{\alpha_{1}})]_{\theta} = L^{p}(\He^{n},H_{1}), \end{displaymath}
recalling \eqref{form3}. Repeating once more the calculation around \eqref{form5}, and finally using Lemma \ref{interpolationLemma}, we obtain
\begin{displaymath} \|G_{1}f\|_{L^{p}(\He^{n})} = \|Tf\|_{L^{p}(\He^{n},H_{1})} \lesssim \|f\|_{[S^{p}_{\alpha_{0}},S^{p}_{\alpha_{1}}]_{\theta}} \lesssim \|f\|_{p,1},\quad\text{for }f\in \calD. \end{displaymath}
This finishes the case $\alpha = 1$  of Theorem \ref{mainGeneral}, recalling Remark \ref{smoothApproximation} and Lemma \ref{l:inhomog_homog}.
The proof of Theorem \ref{mainGeneral} is complete.

\section{Extension to $L^{p}$-mean $\beta$-numbers}\label{extensions}

In this section, we consider an extension of Theorem \ref{main}, briefly mentioned in Section \ref{exAndAppl}, which is analogous to the one discussed at the end of Dorronsoro's paper, \cite[Section 5]{MR796440}. To avoid over-indexing, we slightly re-define our notation for this last section. For $1 \leq q < \infty$, we write
\begin{displaymath} \beta_{q}(B(x,r)) := \beta_{f,1,q}(B(x,r)) := \left( \fint_{B(x,r)} |f(y) - A_{x,r}(y)|^{q} \, dy \right)^{1/q}, \end{displaymath}
where
\begin{displaymath} A_{x,r} = A_{x,r}(f) \in \calA_{1}. \end{displaymath}
So, $\beta_{1}(B(x,r))$ corresponds to $\beta_{f,1}(B(x,r))$ in the previous notation.

\begin{thm}\label{mainGeneralised} Write $Q := 2n + 2$. Let $1 < p < \infty$ and $q \geq 1$ with
\begin{displaymath} \begin{cases} q < \frac{pQ}{Q - p}, & \text{if } 1 < p \leq 2,\\ q < \frac{2Q}{Q - 2}, & \text{if } 2 \leq p < \infty. \end{cases} \end{displaymath}
Let $f \in L^{p}(\He^{n})$ with $\nabla_{\He} f \in L^{p}(\He^{n})$. Then,
\begin{equation}\label{form13} \left( \int_{\He^{n}} \left( \int_{0}^{\infty} \left[\tfrac{1}{r}\beta_{q}(B(x,r)) \right]^{2} \, \frac{dr}{r} \right)^{p/2} \, dx \right)^{1/p} \lesssim \|\nabla_{\He} f\|_{L^{p}(\He^{n})}. \end{equation}
\end{thm}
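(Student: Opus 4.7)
My plan is to follow Dorronsoro's strategy in \cite[Section 5]{MR796440} and literally reduce Theorem \ref{mainGeneralised} to the already-established Theorem \ref{main}.

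The first step is an $L^{q}$-variant of Lemma \ref{lemma1}: for every $A \in \calA_1$ and $q \geq 1$,
\begin{displaymath}
\left(\fint_{B(x,r)} |f - A_{x,r}^1|^q \, dy \right)^{1/q} \lesssim \left(\fint_{B(x,r)} |f - A|^q\, dy\right)^{1/q}.
\end{displaymath}
The proof is identical to that of Lemma \ref{lemma1}, with $L^{1}$ replaced by $L^{q}$; the crucial input is the equivalence of all $L^{r}$-seminorms on the finite-dimensional space $\calA_1|_{B(x,r)}$ with constants depending only on $Q$. In particular this gives $\|A_{x,r}^{1}(g)\|_{L^{\infty}(B(x,r))} \lesssim (\fint_{B} |g|^{q})^{1/q}$, from which the desired near-minimality of $A_{x,r}^{1}$ in $L^{q}$ follows.

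Next, I would apply the $(s, q)$-Sobolev--Poincar\'e inequality on Heisenberg balls \cite{MR850547} with $s := \min(p, 2)$. The hypothesis on $(p,q)$ is precisely what forces $q \leq sQ/(Q-s)$ in both cases $p \leq 2$ and $p \geq 2$. Applying Sobolev--Poincar\'e to $f - \tilde A_{x,r}$, where $\tilde A_{x,r}$ is the explicit affine function appearing in the proof of Proposition \ref{dorrProp} (having vanishing $B(x,r)$-average when subtracted from $f$), and then invoking the $L^{q}$-variant of Lemma \ref{lemma1} to replace $\tilde A_{x,r}$ by $A_{x,r}^1$, I obtain
\begin{displaymath}
\beta_q(B(x,r)) \lesssim r \left(\fint_{CB(x,r)} |\nabla_{\He} f - \langle \nabla_{\He} f\rangle_{CB(x,r)}|^s\right)^{1/s}.
\end{displaymath}

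The final and most delicate step reduces the resulting square function back to $G_1 f$. Following Dorronsoro, this uses a pointwise telescoping estimate for $|f - A_{x,r}^1|$ in terms of $\beta_1$-numbers at smaller scales, obtained by applying Lemma \ref{lemma1} and Corollary \ref{betaMonotonicity} to the dyadic decomposition
\begin{displaymath}
f(y) - A_{x,r}^{1}(y) = [A_{y,r}^{1}(y) - A_{x,r}^{1}(y)] + \sum_{k \geq 0} [A_{y, r 2^{-k-1}}^{1}(y) - A_{y, r 2^{-k}}^{1}(y)],
\end{displaymath}
valid for a.e.\ $y$. Taking $L^{q}$-averages, using Minkowski's integral inequality in the $\rho$-variable, Hardy's inequality in $L^{2}(dr/r)$, and the Fefferman--Stein vector-valued maximal inequality on $\He^{n}$, the entire expression is controlled by $\|G_1 f\|_{L^{p}}$, which in turn is controlled by $\|\nabla_{\He} f\|_{L^{p}}$ thanks to Theorem \ref{main}.

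The main technical obstacle is calibrating the Sobolev exponent against the range in which the vector-valued maximal inequality applies: the choice $s = \min(p, 2)$ is exactly what makes things balance, and is the reason for the dichotomy in the statement. All ingredients beyond Theorem \ref{main} are classical on Heisenberg groups (Sobolev--Poincar\'e \cite{MR850547}, Hardy's inequality, and the Fefferman--Stein maximal inequality), so the Euclidean argument of Dorronsoro transfers without essential new complications.
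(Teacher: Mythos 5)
Your high-level plan (reduce to Theorem \ref{main} via a telescoping estimate and maximal-function machinery) is the right instinct, and you have correctly identified Hardy's inequality, the Fefferman--Stein vector-valued maximal inequality, and the exponent constraint $w < \min(p,2)$ as key ingredients. However, the middle of your argument contains a genuine gap, and the overall structure does not match the paper's proof.

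The problem is your second step. After applying Sobolev--Poincar\'e to $f - \tilde A_{x,r}$ you have
\begin{displaymath}
\tfrac{1}{r}\,\beta_q(B(x,r)) \lesssim \left(\fint_{B(x,Cr)} |\nabla_{\He} f - \langle \nabla_{\He} f\rangle_{B(x,Cr)}|^s\, dy\right)^{1/s},
\end{displaymath}
so the square function you are left with is $\bigl(\int_0^\infty \beta_{\nabla_\He f,\,0,\,s}(B(x,Cr))^2\,\tfrac{dr}{r}\bigr)^{1/2}$, an ``$\alpha=0$'' square function of $\nabla_\He f$. Neither Theorem \ref{main} nor $G_1 f$ controls this object: $G_1 f$ is built from $\tfrac{1}{r}\beta_{f,1}(B(x,r))$, and Proposition \ref{dorrProp}'s reduction $G_{\alpha+1}f \lesssim \sum_j G_\alpha(X_j f)$ is only available for $\alpha > 0$, precisely because the paper can then invoke the $0<\alpha<1$ case on the right-hand side. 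To finish from where your step two leaves you, you would need a separate Littlewood--Paley-type estimate of the form $\bigl\|\bigl(\int_0^\infty \beta_{g,0,s}(B(\cdot,r))^2\,\tfrac{dr}{r}\bigr)^{1/2}\bigr\|_{L^p} \lesssim \|g\|_{L^p}$, applied to $g = X_j f$. You neither state nor prove such an estimate, and it is not in the paper. Your third step then describes a telescoping of $|f - A_{x,r}^1|$ in terms of $\beta_1$-numbers, but that telescoping has nothing to do with the gradient-oscillation square function produced by step two; the two halves of your argument do not connect.

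What the paper actually does is skip the Sobolev--Poincar\'e step entirely. It first proves the pointwise telescoping bound $|f(y) - A_{x,r}(y)| \lesssim \int_0^{4r}\beta_1(B(y,s))\,\tfrac{ds}{s}$ for a.e.\ $y \in B(x,r)$, upgrades it to $|f(y) - A_{x,r}(y)| \lesssim \int_0^{4r}\fint_{B(y,s)}\beta_1(B(z,2s))\,dz\,\tfrac{ds}{s}$, and then takes $L^q$-averages in $y$. The Sobolev-type gain from $w$ to $q$ is obtained by applying the \emph{fractional} Hardy--Littlewood maximal operator $M_\beta \colon L^w(\He^n) \to L^q(\He^n)$, $q = wQ/(Q-\beta w)$, to the function $z \mapsto \beta_1(B(z,2s))$ -- not by applying a Poincar\'e inequality to $f$ itself. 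This keeps everything in terms of $\beta_1$-numbers, so after Hardy and Fefferman--Stein the right-hand side is literally $\|G_1 f\|_{L^p}$ and Theorem \ref{main} applies. Your $L^q$-variant of Lemma \ref{lemma1} (step one) is correct but not needed: the paper uses only the $L^1$ version, since the $L^q$-average enters only after the pointwise telescoping estimate is already in place. In short, the missing idea in your proposal is the fractional maximal function applied to the $\beta_1$-numbers; the Sobolev--Poincar\'e detour is the wrong place to spend the Sobolev gain and leaves the argument open.
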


\begin{remark} The case $q = 1$ is just Theorem \ref{main}. The general case follows from the argument given in \cite[Section 5]{MR796440}, and there is virtually no difference between $\R^{n}$ and $\He^{n}$ here: the idea is to demonstrate that the left hand of \eqref{form13} is bounded by $\|G_{1}f\|_{L^{p}(\He^{n})}$, which is then further bounded by the $L^{p}$-norm of $\nabla_{\He} f$ by Theorem \ref{main}. We will give the details for the reader's convenience. \end{remark}

We begin by claiming that if $x \in \He^{n}$, $r > 0$, then
\begin{equation}\label{form6} |f(y) - A_{x,r}(y)| \lesssim \int_{0}^{4r} \beta_{1}(B(y,s)) \, \frac{ds}{s}, \qquad \text{for a.e. } y \in B(x,r). \end{equation}
To see this, it suffices to establish that
\begin{equation}\label{form7} \fint_{B(y,2^{-n}r)} |f(z) - A_{x,r}(z)| \, dz \lesssim \int_{2^{-n}r}^{4r} \beta_{1}(B(y,s)) \, \frac{ds}{s} \end{equation}
for all $y$ in the open ball $B(x,r)$, and for all $n \in \N$ sufficiently large (depending on $y$). Then \eqref{form6} will follow by Lebesgue's differentiation theorem. To derive \eqref{form7}, pick $y \in B(x,r)$ and $n \in \N$ so large that $B(y,2^{-n}r) \subset B(x,r)$. Then, start with the following estimate:
\begin{align*} \fint_{B(y,2^{-n}r)} |f(z) - A_{x,r}(z)| \, dz & \leq \fint_{B(y,2^{-n}r)} |f(z) - A_{y,2^{-n}r}(z)| \, dz\\
& \quad + \sum_{k = 1}^{n + 1} \fint_{B(y,2^{-n}r)} |A_{y,2^{k - 1 - n}r}(z) - A_{y,2^{k - n}r}(z)| \, dz\\
& \quad + \fint_{B(y,2^{-n}r)} |A_{y,2r}(z) - A_{x,r}(z)| \, dz =: I_{1} + I_{2} + I_{3}. \end{align*}
Here
\begin{equation}\label{form8} I_{1} = \beta_{1}(B(y,2^{-n}r)) \lesssim \int_{2^{-n}r}^{2^{-n + 1}r} \beta_{1}(B(y,s)) \, \frac{ds}{s}, \end{equation}
applying Corollary \ref{betaMonotonicity}. To treat $I_{2}$, recall that $A_{x,s} := A_{x,s}(f) \in \calA_{1}$. In general, we will write $A_{x,s}(g)$ for the element of $\calA_{1}$ corresponding to $g \in L^{1}_{loc}(\He^{n})$. In particular, since $A_{y,2^{k - n}r} \in \calA_{1}$, we have
\begin{displaymath} A_{y,2^{k - 1 - n}r}(A_{y,2^{k - n}r}) = A_{y,2^{k - n}r}. \end{displaymath}
Hence, for $1 \leq k \leq n + 1$,
\begin{align*} \fint_{B(y,2^{-n}r)} |A_{y,2^{k - 1 - n}r}(z) - A_{y,2^{k - n}r}(z)| \, dz & = \fint_{B(y,2^{-n}r)} |A_{y,2^{k - 1 - n}r}[f - A_{y,2^{k - n}r}](z)| \, dz\\
& \leq \|A_{y,2^{k - 1 - n}r}[f - A_{y,2^{k - n}r}]\|_{L^{\infty}(B(y,2^{k - 1 - n}r))}\\
& \lesssim \fint_{B(y,2^{k - 1 - n}r)} |f - A_{y,2^{k - n}r}(z)| \, dz,   \end{align*}
using \eqref{claim} in the final inequality. Then, to complete the treatment of $I_{2}$, it remains to note that
\begin{displaymath} \fint_{B(y,2^{k - 1 - n}r)} |f - A_{y,2^{k - n}r}(z)| \, dz \lesssim \beta_{1}(B(y,2^{k - n}r)) \lesssim \int_{2^{k - n}r}^{2^{k + 1 - n}r} \beta_{1}(B(y,s)) \, \frac{ds}{s}, \end{displaymath}
using Corollary \ref{betaMonotonicity} again. Finally, to estimate $I_{3}$, note that
\begin{displaymath} A_{y,2r} = A_{x,r}(A_{y,2r}), \end{displaymath}
and hence
\begin{align*} \fint_{B(y,2^{-n}r)} |A_{y,2r}(z) - A_{x,r}(z)| \, dz & = \fint_{B(y,2^{-n}r)} |A_{x,r}[A_{y,2r} - f](z)| \, dz\\
& \stackrel{\eqref{claim}}{\lesssim} \fint_{B(x,r)} |A_{y,2r}(z) - f(z)| \, dz\\
& \lesssim \fint_{B(y,2r)} |A_{y,2r}(z) - f(z)| \, dz = \beta_{1}(B(y,2r)).  \end{align*}
In the application of \eqref{claim}, we used the assumption that $B(y,2^{-n}r) \subset B(x,r)$. Finally,
\begin{displaymath} \beta_{1}(B(y,2r)) \lesssim \int_{2r}^{4r} \beta_{1}(B(y,s)) \, \frac{ds}{s} \end{displaymath}
by Corollary \ref{betaMonotonicity}. Summing the estimates above for $I_{1},I_{2},I_{3}$ completes the proof of \eqref{form7}.

As a corollary of \eqref{form6}, we infer the following inequality, which is an analogue of \cite[(11)]{MR796440}: for $x \in \He^{n}$ and $r > 0$,
\begin{equation}\label{form9} |f(y) - A_{x,r}(y)| \lesssim \int_{0}^{4r} \beta_{1}(B(y,s)) \, \frac{ds}{s} \lesssim \int_{0}^{4r} \fint_{B(y,s)} \beta_{1}(B(z,2s)) \, dz \, \frac{ds}{s}  \end{equation}
for Lebesgue almost all $y \in B(x,r)$. To obtain the second inequality, use Corollary \ref{betaMonotonicity} once more.

From this point on, one can follow the proof presented after \cite[(11)]{MR796440} quite literally. Fix, first,
\begin{displaymath} 1 < p \leq 2 \quad \text{and} \quad 1 \leq q < \frac{pQ}{Q - p}. \end{displaymath}
Then, choose some $1 < w < p$ and $0 < \beta < 1$ such that
\begin{equation}\label{form10} q = \frac{wQ}{Q - \beta w}. \end{equation}
We will apply the fact that the fractional Hardy-Littlewood maximal function
\begin{displaymath} M_{\beta}g(y) := \sup_{s > 0} \left\{ s^{\beta} \fint_{B(y,s)} |g(z)| \, dz \right\} \end{displaymath}
maps $L^{w}(\He^{n}) \to L^{q}(\He^{n})$ boundedly, when $q,w$ are related as in \eqref{form10}. This fact holds generally in $Q$-regular metric measure spaces, see for example \cite[Theorem 4.1]{MR3108871}. It now follows from \eqref{form9}, Minkowski's integral inequality, and the boundedness of $M_{\beta}$ that
\begin{align*} r^{Q/q}\beta_{q}(B(x,r)) & \lesssim \int_{0}^{4r} s^{-\beta} \left( \int_{B(x,r)} M_{\beta}(\beta_{1}(B(\cdot, 2s))\chi_{B(x,5r)})(y)^{q} \, dy \right)^{1/q} \, \frac{ds}{s}\\
& \lesssim \int_{0}^{4r} s^{-\beta} \left( \int_{B(x,5r)} \beta_{1}(B(z,2s))^{w} \, dz \right)^{1/w} \, \frac{ds}{s}\\
& \lesssim \int_{0}^{4r} s^{-\beta} r^{Q/w} \mathcal{M}_{w}(\beta_{1}(B(\cdot,2s))) \, \frac{ds}{s}, \end{align*}
where $\mathcal{M}_{w}$ stands for the maximal function $\mathcal{M}_{w}(g) = (M(|g|^{w}))^{1/w}$. Consequently, recalling from \eqref{form10} that $Q/w - Q/q = \beta$, we arrive at
\begin{displaymath} \beta_{q}(B(x,r)) \lesssim r^{\beta} \int_{0}^{4r} s^{-\beta - 1} \mathcal{M}_{w}(\beta_{1}(B(\cdot,2s))(x) \, ds. \end{displaymath}
Next, noting that $3 - 2\beta > 1$ and using Hardy's inequality in the form
\begin{displaymath} \int_{0}^{\infty} r^{3 - 2\beta} \left(\int_{0}^{r} g(s) \, ds \right)^{2} \, dr \lesssim \int_{0}^{\infty} r^{2\beta - 1} g(r)^{2} \, dr, \end{displaymath}
see \cite[Theorem 330]{MR0046395}, we obtain
\begin{align} \left( \int_{0}^{\infty} \left[\tfrac{1}{r}\beta_{q}(B(x,r))\right]^{2} \, \frac{dr}{r} \right)^{1/2} & \lesssim \left(\int_{0}^{\infty} r^{2\beta - 3} \left[ \int_{0}^{4r} s^{-\beta - 1} \mathcal{M}_{w}(\beta_{1}(B(\cdot,2s)))(x) \, ds \right]^{2} \, dr \right)^{1/2}\notag \\
&\label{form12} \lesssim \left(\int_{0}^{\infty} \mathcal{M}_{w}(\tfrac{1}{r}\beta_{1}(B(\cdot,2r)))(x)^{2} \, \frac{dr}{r} \right)^{1/2}. \end{align}
Finally, following \cite{MR796440} verbatim, the estimate
\begin{equation}\label{form11} \left( \int_{\He^{n}} \left( \int_{0}^{\infty} \left[\tfrac{1}{r}\beta_{q}(B(x,r))\right]^{2} \, \frac{dr}{r} \right)^{p/2} \, dx \right)^{1/p} \lesssim \left(\int_{\He^{n}} \left( \int_{0}^{\infty} \left[\tfrac{1}{r} \beta_{1}(B(x,r)) \right]^{2} \, \frac{dr}{r} \right)^{p/2} \, dx \right)^{1/p} \end{equation}
can be inferred from the Fefferman-Stein vector-valued maximal function inequality, namely
\begin{equation}\label{FS} \bigg( \int_{\He^{n}} \bigg[\sum_{j \in \Z} M(g_{j})^{u}(x) \bigg]^{v/u} \, dx \bigg)^{1/v} \lesssim \bigg( \int_{\He^{n}} \bigg[\sum_{j \in \Z} g_{j}(x)^{u} \bigg]^{v/u} \, dx \bigg)^{1/v}. \end{equation}
Here $(g_{j})_{j \in \Z}$ is a family of functions $\He^{n} \to \R$, and $u,v \in (1,\infty)$. The inequality \eqref{FS} is valid generally in $Q$-regular metric measure spaces, see \cite[Theorem 1.2]{MR2542655}. To infer \eqref{form11} from \eqref{form12}, one needs to apply \eqref{FS} to functions of the form
\begin{displaymath} g_{j}(x) = 2^{j}\beta_{1}(B(x,2^{-j + 2})) \end{displaymath}
and exponents $u = 2/w > 1$ and $v = p/w > 1$, noting (by Corollary \ref{betaMonotonicity}) that
\begin{displaymath} M_{w}(\tfrac{1}{r}\beta_{1}(B(\cdot,2r))) \lesssim M_{w}(g_{j}), \qquad r \in [2^{-j - 1},2^{-j}]. \end{displaymath}
Once \eqref{form11} has been established, the case $1 \leq p \leq 2$ of Theorem \ref{mainGeneralised} is a corollary of Theorem \ref{main} (since the right hand side of \eqref{form11} is precisely $\|G_{1}f\|_{L^{p}(\He^{n})}$).

The case $p \geq 2$ is similar. Indeed, if $p \geq 2$ and $1 \leq q < 2Q/(Q - 2)$, we may choose $1 < w < 2$ and $0 < \beta < 1$ such that \eqref{form10} holds. Then, the arguments after \eqref{form10} can be repeated. Finally, the choices of exponents $u := 2/w > 1$ and $v := p/w > 1$ also remain valid in the application of the Fefferman-Stein inequality. The proof of Theorem \ref{mainGeneralised} is complete.

\section{Application: Vertical vs. horizontal Poincar\'e inequalities}\label{HvsVSection}

As a corollary of Theorem \ref{mainGeneralised}, we derive the following \emph{vertical vs. horizontal Poincar\'e inequality} originally due to Lafforgue and Naor \cite[Theorem 2.1]{MR3273443} (the case $p = q = 2$ was earlier obtained by Austin, Naor, and Tessera \cite{MR3095705}; see also \cite[Remark 43]{NY}):

\begin{thm}\label{HvsV} Let $1 < p \leq 2$, and let $f \in L^{p}(\He^{n})$ with $\nabla_{\He} f \in L^{p}(\He^{n})$. Then,
\begin{equation}\label{HvsVIneq} \left(\int_{0}^{\infty} \left[ \int_{\He^{n}} \left(\frac{|f(x) - f(x \cdot (\mathbf{0},t))|}{\sqrt{t}} \right)^{p} \, dx \right]^{2/p} \, \frac{dt}{t} \right)^{1/2} \lesssim \|\nabla_{\He} f\|_{L^{p}(\He^{n})}. \end{equation}
\end{thm}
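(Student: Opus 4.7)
The strategy is to deduce Theorem \ref{HvsV} from Theorem \ref{mainGeneralised} applied with the admissible choice $q = p \in (1, 2]$ (note that $p < pQ/(Q - p)$ holds trivially). The starting observation is that every $A \in \calA_{1}$ depends only on the horizontal variables, while $(\mathbf{0}, t)$ lies in the center of $\He^{n}$; hence $A(y) = A(y \cdot (\mathbf{0}, t))$ for every $y \in \He^{n}$ and $t > 0$. Consequently, for any $x, y \in \He^{n}$ and $r > 0$ one has the pointwise identity
\begin{displaymath}
f(y) - f(y \cdot (\mathbf{0}, t)) = (f - A^{1}_{x, r})(y) - (f - A^{1}_{x, r})(y \cdot (\mathbf{0}, t)).
\end{displaymath}
Setting $r := \sqrt{t}$ and averaging the (still $x$-independent) left-hand side over $x \in B(y, r)$, Jensen's inequality for $|\cdot|^{p}$ yields
\begin{displaymath}
|f(y) - f(y \cdot (\mathbf{0}, t))|^{p} \lesssim \fint_{B(y, r)} |(f - A^{1}_{x, 2r})(y)|^{p} \, dx + \fint_{B(y, r)} |(f - A^{1}_{x, 2r})(y \cdot (\mathbf{0}, t))|^{p} \, dx,
\end{displaymath}
where the enlarged radius $2r$ is chosen so that both $y$ and $y \cdot (\mathbf{0}, t)$ lie in $B(x, 2r)$ whenever $x \in B(y, r)$.

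Next, I would integrate the above in $y$ over $\He^{n}$ and exchange the order of integration using Fubini; since the Kor\'{a}nyi distance is symmetric, $\mathbf{1}_{B(y, r)}(x) = \mathbf{1}_{B(x, r)}(y)$, and $|B(y, r)|$ depends only on $r$. For the second inner integral I would moreover use that the translation $y \mapsto y \cdot (\mathbf{0}, t)$ preserves Lebesgue measure on $\He^{n}$ (as $(\mathbf{0}, t)$ is central) and sends $B(x, r)$ into $B(x, 2r)$ when $r = \sqrt{t}$. Both contributions are therefore controlled by $\beta_{p}(B(x, 2r))^{p}$, producing
\begin{displaymath}
\|F_{t}\|_{L^{p}(\He^{n})}^{p} \lesssim \int_{\He^{n}} \beta_{p}(B(x, 2\sqrt{t}))^{p} \, dx, \qquad F_{t}(y) := f(y) - f(y \cdot (\mathbf{0}, t)).
\end{displaymath}

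Since the left-hand side of \eqref{HvsVIneq} equals $(\int_{0}^{\infty} \|F_{t}\|_{L^{p}(\He^{n})}^{2} \, dt/t^{2})^{1/2}$, the change of variable $r = \sqrt{t}$ combined with the previous bound produces an upper estimate of the form $(\int_{0}^{\infty} \|r^{-1} \beta_{p}(B(\cdot, 2r))\|_{L^{p}(\He^{n})}^{2} \, dr/r)^{1/2}$. At this point I would invoke Minkowski's integral inequality to swap the order of the $L^{2}(dr/r)$- and $L^{p}(\He^{n})$-norms, obtaining the bound $\|G_{p} f\|_{L^{p}(\He^{n})}$ with $G_{p}$ the square function from Theorem \ref{mainGeneralised} for $q = p$; the proof is then concluded by Theorem \ref{mainGeneralised} itself. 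The delicate step, and the source of the restriction $p \leq 2$, is precisely this Minkowski exchange: it goes in the required direction only when $p \leq 2$, and any argument following this route must work entirely within that range.
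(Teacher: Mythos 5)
Your proposal is correct and follows essentially the same route as the paper: exploit that affine functions in $\calA_1$ are invariant under vertical translation $y \mapsto y\cdot(\mathbf{0},t)$ to reduce the vertical difference to an $L^p$-averaged $\beta_p$-quantity at scale $\sqrt{t}$, then apply Minkowski's integral inequality (which indeed requires $p \leq 2$ to send $L^2_{dr/r}(L^p_x)$ to $L^p_x(L^2_{dr/r})$) and finally Theorem \ref{mainGeneralised} with $q=p$. The only technical variation is in proving $\|F_t\|_{L^p(\He^n)}^p \lesssim \int_{\He^n}\beta_p(B(x,C\sqrt{t}))^p\,dx$: the paper passes through a bounded-overlap covering of $\He^n$ by balls of radius $\sqrt{t}$, whereas you average over $x\in B(y,\sqrt{t})$ and swap the integrals via Fubini and left-invariance of Lebesgue measure — the two devices are interchangeable and yield the same estimate.
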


Here we have denoted by $(\mathbf{0},t)$ the point $(0,\ldots,0,t) \in \R^{2n} \times \R$ with $t \in \R$. In \cite{MR3273443}, the target of $f$ is allowed to be a much more general Banach space than $\R$, and the "$2$" in \eqref{HvsVIneq} can also be a more general exponent $q \geq 2$.

\begin{proof}[Proof of Theorem \ref{HvsV}] Fix $1 < p \leq 2$ and $t > 0$. We first claim that
\begin{equation}\label{form14} \int_{\He^{n}} \left(\frac{|f(x) - f(x \cdot (\mathbf{0},t))|}{\sqrt{t}} \right)^{p} \, dx \lesssim\int_{\He^{n}}  \left[\tfrac{1}{\sqrt{t}} \cdot \beta_{p}(B(x,C_{2}\sqrt{t}))\right]^{p} \, dx, \end{equation}
if $C_{2} \geq 1$ is a sufficiently large constant. Here $\beta_{p}$ is the $L^{p}$-based $\beta$-number, as defined at the head of the previous section. Let $\calB_{\sqrt{t}}$ be a collection of balls "$B$" of radius $\sqrt{t}$ whose union covers $\He^{n}$, and such that the concentric balls "$\hat{B}$" of radius $C_{1}\sqrt{t}$ have bounded overlap for some constant $1 \leq C_{1} < C_{2}$ to be determined shortly. For $B \in \calB_{\sqrt{t}}$, let $A_{\hat{B}} \in \calA_{1}$ be the affine function from the definition of $\beta_{p}(\hat{B})$. Since
\begin{displaymath} A_{\hat{B}}(x) = A_{\hat{B}}(x \cdot (\mathbf{0},t)), \qquad x \in \He^{n}, \: t > 0, \end{displaymath}
we may deduce \eqref{form14} as follows, assuming that $1 \leq C_{1} < C_{2}$ are appropriately chosen:
\begin{align*} \int_{\He^{n}} \left(\frac{|f(x) - f(x \cdot (\mathbf{0},t))|}{\sqrt{t}} \right)^{p} \, dx & \leq \sum_{B \in \calB_{\sqrt{t}}}  \int_{B} \left(\frac{|f(x) - f(x \cdot (\mathbf{0},t))|}{\sqrt{t}} \right)^{p} \, dx\\
& \lesssim \sum_{B \in \calB_{\sqrt{t}}} \bigg[ \int_{B} \left(\frac{|f(x) - A_{\hat{B}}(x)|}{\sqrt{t}} \right)^{p}\\
&\qquad \qquad \quad + \left(\frac{|f(x \cdot (\mathbf{0},t)) - A_{\hat{B}}(x \cdot (\mathbf{0},t))}{\sqrt{t}} \right)^{p} \, dx \bigg]\\
&\lesssim \sum_{B \in \calB_{\sqrt{t}}} \int_{\hat{B}} \left(\frac{|f(x) - A_{\hat{B}}(x)|}{\sqrt{t}} \right)^{p} \, dx\\
& = \sum_{B \in \calB_{\sqrt{t}}} \left[ \tfrac{1}{\sqrt{t}} \cdot \beta_{p}(\hat{B})\right]^{p}|\hat{B}| \lesssim \int_{\He^{n}}  \left[\tfrac{1}{\sqrt{t}} \cdot \beta_{p}(B(x,C_{2}\sqrt{t}))\right]^{p} \, dx. \end{align*}
In the last inequality we implicitly used Corollary \ref{betaMonotonicity}. Now, the inequality \eqref{HvsVIneq} follows from Minkowski's integral inequality and Theorem \ref{mainGeneralised} with $q = p$:
\begin{align*} & \left(\int_{0}^{\infty} \left[ \int_{\He^{n}} \left(\frac{|f(x) - f(x \cdot (\mathbf{0},t))|}{\sqrt{t}} \right)^{p} \, dx \right]^{2/p} \, \frac{dt}{t} \right)^{1/2}\\
& \qquad \stackrel{\eqref{form14}}{\lesssim} \left(\int_{0}^{\infty} \left[ \int_{\He^{n}} \left[\tfrac{1}{\sqrt{t}} \cdot \beta_{p}(B(x,C_{2}\sqrt{t}))\right]^{p} \, dx \right]^{2/p} \, \frac{dt}{t} \right)^{1/2}\\
& \qquad \: \leq \left( \int_{\He^{n}} \left[ \int_{0}^{\infty} \left[\tfrac{1}{\sqrt{t}} \cdot \beta_{p}(B(x,C_{2}\sqrt{t}))\right]^{2} \, \frac{dt}{t} \right]^{p/2} \, dx \right)^{1/p} \lesssim \|\nabla_{\He}f\|_{L^{p}(\He^{n})}, \end{align*}
making the change of variables $\sqrt{t} \mapsto r$ before applying Theorem \ref{mainGeneralised}. The proof of Theorem \ref{HvsV} is complete. \end{proof}

\bibliographystyle{plain}
\bibliography{references}

\def\cprime{$'$}
\begin{thebibliography}{10}

\bibitem{MR3095705}
Tim Austin, Assaf Naor, and Romain Tessera.
\newblock Sharp quantitative nonembeddability of the {H}eisenberg group into
  superreflexive {B}anach spaces.
\newblock {\em Groups Geom. Dyn.}, 7(3):497--522, 2013.

\bibitem{MR0482275}
J\"{o}ran Bergh and J\"{o}rgen L\"{o}fstr\"{o}m.
\newblock {\em Interpolation spaces. {A}n introduction}.
\newblock Springer-Verlag, Berlin-New York, 1976.
\newblock Grundlehren der Mathematischen Wissenschaften, No. 223.

\bibitem{MR0167830}
A.-P. Calder\'{o}n.
\newblock Intermediate spaces and interpolation, the complex method.
\newblock {\em Studia Math.}, 24:113--190, 1964.

\bibitem{MR1828225}
Thierry Coulhon, Emmanuel Russ, and Val\'{e}rie Tardivel-Nachef.
\newblock Sobolev algebras on {L}ie groups and {R}iemannian manifolds.
\newblock {\em Amer. J. Math.}, 123(2):283--342, 2001.

\bibitem{DS1}
G.~David and S.~Semmes.
\newblock Singular integrals and rectifiable sets in {${\bf R}^n$}: Au-del\`a
  des graphes lipschitziens.
\newblock {\em Ast\'erisque}, (193):152, 1991.

\bibitem{MR796440}
Jos{\'e}~R. Dorronsoro.
\newblock A characterization of potential spaces.
\newblock {\em Proc. Amer. Math. Soc.}, 95(1):21--31, 1985.

\bibitem{MR3469687}
Veronique Fischer and Michael Ruzhansky.
\newblock {\em Quantization on nilpotent {L}ie groups}, volume 314 of {\em
  Progress in Mathematics}.
\newblock Birkh\"{a}user/Springer, [Cham], 2016.

\bibitem{MR0494315}
G.~B. Folland.
\newblock Subelliptic estimates and function spaces on nilpotent {L}ie groups.
\newblock {\em Ark. Mat.}, 13(2):161--207, 1975.

\bibitem{MR2542655}
Loukas Grafakos, Liguang Liu, and Dachun Yang.
\newblock Vector-valued singular integrals and maximal functions on spaces of
  homogeneous type.
\newblock {\em Math. Scand.}, 104(2):296--310, 2009.

\bibitem{MR0046395}
G.~H. Hardy, J.~E. Littlewood, and G.~P\'{o}lya.
\newblock {\em Inequalities}.
\newblock Cambridge, at the University Press, 1952.
\newblock 2d ed.

\bibitem{MR3108871}
Toni Heikkinen, Juha Lehrb\"{a}ck, Juho Nuutinen, and Heli Tuominen.
\newblock Fractional maximal functions in metric measure spaces.
\newblock {\em Anal. Geom. Metr. Spaces}, 1:147--162, 2013.

\bibitem{MR3533305}
Tuomas Hyt\"{o}nen, Sean Li, and Assaf Naor.
\newblock Quantitative affine approximation for {UMD} targets.
\newblock {\em Discrete Anal.}, pages Paper No. 6, 37, 2016.

\bibitem{MR850547}
David Jerison.
\newblock The {P}oincar\'{e} inequality for vector fields satisfying
  {H}\"{o}rmander's condition.
\newblock {\em Duke Math. J.}, 53(2):503--523, 1986.

\bibitem{MR3273443}
Vincent Lafforgue and Assaf Naor.
\newblock Vertical versus horizontal {P}oincar\'{e} inequalities on the
  {H}eisenberg group.
\newblock {\em Israel J. Math.}, 203(1):309--339, 2014.

\bibitem{NY}
Assaf Naor and Robert Young.
\newblock Vertical perimeter versus horizontal perimeter.
\newblock {\em Ann. of Math. (2)}, 188(1):171--279, 2018.

\bibitem{MR3677864}
Mart\'{i} Prats.
\newblock Sobolev regularity of the {B}eurling transform on planar domains.
\newblock {\em Publ. Mat.}, 61(2):291--336, 2017.

\end{thebibliography}

\end{document}